\documentclass{amsart}                                                 
\usepackage[T1]{fontenc}
\usepackage[english]{babel}
\usepackage{amsmath}
\usepackage{amsthm}                      
\usepackage{amssymb}
\usepackage{url}
\usepackage[small,it]{caption}
\theoremstyle{plain}                                                           
\newtheorem{thm}{Theorem}[section]
\newtheorem{lem}[thm]{Lemma}

\newtheorem{prop}[thm]{Proposition}
\theoremstyle{definition}
\newtheorem{ex}[thm]{Example}
\newtheorem{rem}[thm]{Remark}
\newtheorem{convention}[thm]{Convention}
\newtheorem{defn}[thm]{Definition}
\newtheorem{notation}[thm]{Notation}
\title{Cohomology of local systems on loci of $d$-elliptic abelian surfaces}
\author{Dan Petersen}

\thanks{The author is supported by the G\"oran Gustafsson Foundation for Research in Natural Sciences and Medicine.}
\subjclass{14J15, 14G35, 11F11}
\address{Dan Petersen\\Department of Mathematics\\ KTH Royal Institute of Technology\\ 100 44 Stockholm \\ Sweden} 
\parindent=0pt 
\parskip=10pt                                                                                           
\DeclareMathOperator{\id}{id}  

\DeclareMathOperator{\SL}{SL}
\DeclareMathOperator{\GL}{GL}

\DeclareMathOperator{\Sym}{Sym}                  
\newcommand{\Sp}{\mathrm{Sp}}

\DeclareMathOperator{\Res}{Res}

\newcommand{\la}{\langle}
\newcommand{\ra}{\rangle}
\newcommand{\field}[1]{\ensuremath{\mathbf{#1}}}
\newcommand{\R}{\ensuremath{\field{U}}}        
\newcommand{\Q}{\ensuremath{\field{Q}}}
 \newcommand{\sym}{\ensuremath{\mathbb{S}}}
 \newcommand{\Z}{\ensuremath{\field{Z}}} 
                                                           
\newcommand{\C}{\ensuremath{\field{C}}}  
\newcommand{\V}{\ensuremath{\field{V}}}

\newcommand{\W}{\ensuremath{\field{W}}}
\newcommand{\e}{\ensuremath{\mathbf{e}}}
\newcommand{\A}{\ensuremath{\mathcal{A}}}
\newcommand{\tate}{\ensuremath{\field{L}}}
\newcommand{\hej}{\ensuremath{\mathcal{E}}}
\newcommand{\hh}{\ensuremath{\mathcal{E}}}
\newcommand{\M}{\ensuremath{\mathcal{M}}}  
\newcommand{\X}{\ensuremath{\mathcal{X}}}
\newcommand{\Y}{\mathcal{Y}}

\newcommand{\st}{\; | \;}            
\begin{document} 
   \maketitle

\begin{abstract}
We consider the loci of $d$-elliptic curves in $\mathcal{M}_2$, and corresponding loci of $d$-elliptic surfaces in $\mathcal{A}_2$. We show how a description of these loci as quotients of a product of modular curves can be used to calculate cohomology of natural local systems on them, both as mixed Hodge structures and $\ell$-adic Galois representations. We study in particular the case $d=2$, and compute the Euler characteristic of the moduli space of $n$-pointed bi-elliptic genus $2$ curves in the Grothendieck group of Hodge structures. \end{abstract}

\section{Introduction}

To an irreducible representation of $\Sp_{2g}$ with highest weight vector $\lambda$ one can associate in a natural way a local system $\W_\lambda$ on the moduli spaces $\mathcal{A}_g$, hence also $\M_g$. One reason for studying these local systems is that their complex (resp. $\ell$-adic) cohomology groups will contain spaces of elliptic and Siegel modular forms (resp. their associated $\ell$-adic Galois representations) as subquotients. In particular, one can study modular forms by looking at the cohomology of these local systems, and vice versa. 

When $g=1$ this is described by the Eichler--Shimura theory, and in particular its Hodge-theoretic/$\ell$-adic interpretation \cite{deligne69} which expresses the cohomology of such a local system in terms of spaces of modular forms on the corresponding modular curve. See \cite[\S 4]{faltings87} for a r\'esum\'e. For higher genera the situation is not as well understood. The (integer-valued) Euler characteristics of these local systems on $\M_2$ were calculated in \cite{getzler02}. Their Euler characteristics on $\M_g$ and $\A_g$ for $g=2,3$, now taken in the Grothendieck group of $\ell$-adic Galois representations, have been investigated by means of point counting in the sequence of papers \cite{fvdg1}, \cite{fvdg2}, \cite{bfg08}, \cite{bfg11}.

Another reason to be interested in such local systems is that they arise when computing the cohomology of relative configuration spaces. For instance, in the case of $\M_g$, the results of \cite{getzler99} imply that calculating the Euler characteristics of all of these local systems on $\mathcal M_g$ is equivalent to calculating the $\sym_n$-equivariant Euler characteristic of $\M_{g,n}$ for all $n$. 

In this article, we shall study the restriction of these local systems to certain loci in $\A_2$ of abelian surfaces with a degree $d^2$ isogeny to a product of elliptic curves. We call such surfaces $d$-elliptic, and denote the (normalization of the) locus of $d$-elliptic surfaces by $\hh_d$. A curve of genus $2$ is $d$-elliptic in the usual sense, i.e.\ admits a degree $d$ covering onto an elliptic curve, if and only if its Jacobian is $d$-elliptic in this sense. 

These loci of $d$-elliptic curves and surfaces are classically studied by algebraic geometers and number theorists. Biermann and Humbert showed that the locus of $d$-elliptic surfaces in $\A_2$ is exactly equal to the Humbert surface \cite{vdG88} of invariant $d^2$, see \cite{kani94}. Moreover, a natural double cover of $\hh_d$ can be described as a quotient $\Gamma \setminus \mathfrak{H}\times \mathfrak H$ with a group $\Gamma$ acting by a ``twisted'' diagonal action, which makes the double cover appear as a degenerate Hilbert modular surface. These degenerate Hilbert modular surfaces were studied in \cite{hermann91}, and in \cite{carlton01} steps were taken towards studying modular forms on them. The latter article gives a concrete interpretation to the spaces of modular forms that we find to occur in the cohomology of these local systems.

We now give an outline of the article. In section 2, we define the spaces $\hh_d$ via their modular interpretation, and explain their description as quotients of products of modular curves. We also discuss how representations of $\SL_2(\Z/d)$ behave under conjugation by elements of $\GL_2(\Z/d)$. Section 3 proves branching formulas for $\Sp_2 \wr \sym_2 \hookrightarrow \Sp_4$  and $\Sp_2 \times \sym_2 \hookrightarrow \Sp_2 \wr \sym_2$, which will later be used to determine how certain local systems behave under pullback between modular varieties. These local systems are introduced in Section 4. In section 5, we recall the Eichler--Shimura theory, expressing the cohomology of such local systems on modular curves in terms of modular forms, and show how this leads to a description of the cohomology of local systems on $\hh_d$. In section 6, we specialize to $d=2$ and show how the results of this paper can be used to compute the Euler characteristic of the space of $n$-pointed bi-elliptic genus $2$ curves in the Grothendieck group of Hodge structures for any $n$. 




I am grateful to my advisor Carel Faber for posing this problem to me, and for patient discussions. 

\begin{convention}Unless stated otherwise, all cohomology will be taken in the category of rational mixed Hodge structures. 
\end{convention}

\begin{rem} Restricting to Hodge structures is not really necessary. We could for instance substitute ``smooth $\ell$-adic sheaf'' for ``local system'' throughout and our computations would work equally well in the category of $\ell$-adic Galois representations and positive characteristic (provided that the integer $d$ is invertible on our base scheme).  In fact it is not so hard (although we shall not do so) to do everything motivically, using the results of \cite{scholl90} to construct these cohomology groups as Chow motives. \end{rem}

\section{The $d$-elliptic loci}

\begin{defn} Let $(A,\Theta)$ be a principally polarized abelian surface. We say that $A$ is $d$-elliptic if there is a (connected) curve $E$ lying on $A$ such that: 
\begin{enumerate}
\item $E$ is a subgroup of $A$ under the group law; 
\item the genus of $E$ is one; 
\item $(E.\Theta) = d$. 
\end{enumerate} \end{defn} 

There are several equivalent characterizations of $d$-elliptic surfaces, which we now briefly recall. This description is due to Frey and Kani \cite{fk91}, who write that  ``the following construction appears to be known in principle''. Much of this section is a special case of the general theory in \cite[Chapter 12]{bl92}. See also \cite{kani94}. 

Let $A$ be $d$-elliptic. Then $E \hookrightarrow A$ dualizes to a surjection $A \to E$ whose kernel is connected. Then it, too, is an elliptic curve, which we denote $E'$ and call the \emph{conjugate} of $E$. In other words, $E'$ is the ``Prym variety'' of $A \to E$. The curves $E$ and $E'$ intersect (inside $A$) exactly in their respective $d$-torsion points. The induced isomorphism $\phi \colon E[d]\to E'[d]$ of $d$-torsion groups inverts the Weil pairing, i.e. $\la x, y \ra_E = \la \phi x, \phi y \ra^{-1}_{E'}$. It follows that the induced isogeny $E \times E' \to A$ has degree $d^2$, and that it is defined by quotienting out the subgroup defined by the graph of $\phi$. Hence one could also define an abelian surface to be $d$-elliptic when it can be written as $E \times E' / \{ (x,\phi x) \st x \in E[d]\} $, where $\phi$ is an isomorphism that inverts the Weil pairing as above.

\begin{defn}\label{unordered}We denote by $\hej_d$ the moduli stack of pairs $(A,\Theta, \{E,E'\})$ where $(A,\Theta)$ is $d$-elliptic abelian surface and $\{E,E'\}$ is an unordered pair\footnote{To be more precise, one should not consider unordered pairs but rather the groupoid whose objects are 4-tuples $(A,\Theta,E,E')$ satisfying the above conditions, and whose isomorphisms are cartesian diagrams which are allowed to switch $E$ and $E'$.} of conjugate elliptic subgroups of $A$. \end{defn}

The preceding paragraph implies an alternative description of $\hej_d$. Let $Y(d)$ denote the open modular curve parametrizing elliptic curves with a symplectic basis of their $d$-torsion groups. Then clearly the space
\[ Y(d) \times Y(d) / (\sym_2 \times \SL_2(\Z/d)), \]
where $\sym_2$ swaps the two factors and $\SL_2(\Z/d)$ acts diagonally, parametrizes unordered pairs of elliptic curves together with a symplectic isomorphism of their $d$-torsion groups. To invert the Weil pairing we need to consider instead an action of the semidirect product
$ \sym_2 \ltimes \SL_2(\Z/d) $
%
%
where $\sym_2$ acts on $\SL_2(\Z/d)$ by conjugation with an element $\varepsilon \in \mathrm{GL}_2(\Z/d)$ such that $\det(\varepsilon) = -1$. Then one has the following proposition:

\begin{prop} There is an isomorphism
\[ Y(d) \times Y(d) / (\sym_2 \ltimes \SL_2(\Z/d)) \cong \hej_d\] 
where $\SL_2(\Z/d)$ acts normally on the first copy of $Y(d)$ and via the conjugated action on the second copy. \end{prop}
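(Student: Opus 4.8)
The plan is to exhibit mutually quasi-inverse morphisms of stacks, making precise the dictionary established in the paragraph preceding the statement. First I would construct a morphism $Y(d) \times Y(d) \to \hej_d$. Over $Y(d) \times Y(d)$ there are two tautological elliptic curves $\mathcal{C}_1, \mathcal{C}_2$, each equipped with a symplectic trivialization $\psi_i$ of its $d$-torsion; fixing once and for all an $\varepsilon \in \GL_2(\Z/d)$ with $\det \varepsilon = -1$, set $\phi := \psi_2 \circ \varepsilon \circ \psi_1^{-1} \colon \mathcal{C}_1[d] \to \mathcal{C}_2[d]$. Because $\det \varepsilon = -1$, the relation $\la \varepsilon x, \varepsilon y \ra = \la x, y \ra^{-1}$ for the standard symplectic form shows that $\phi$ inverts the Weil pairing, so the graph $\Gamma_\phi$ inside the $d$-torsion of $\mathcal{C}_1 \times \mathcal{C}_2$ is isotropic for the product Weil pairing; being of order $d^2$ it is in fact a maximal isotropic subgroup, so the product principal polarization descends to a principal polarization $\Theta$ on $A := (\mathcal{C}_1 \times \mathcal{C}_2)/\Gamma_\phi$ (this is precisely the construction recalled above, a special case of \cite[Ch.~12]{bl92}). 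The images of $\mathcal{C}_1$ and $\mathcal{C}_2$ in $A$ form a conjugate pair of elliptic subgroups, each meeting $\Theta$ in degree $d$, and sending a point of $Y(d) \times Y(d)$ to $(A, \Theta, \{E, E'\})$ gives the desired morphism.

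Next I would verify that this morphism is invariant under $\sym_2 \ltimes \SL_2(\Z/d)$, so that it factors through the quotient stack. For $g \in \SL_2(\Z/d)$, replacing $(\psi_1, \psi_2)$ by $(\psi_1 g, \psi_2 \varepsilon g \varepsilon^{-1})$ leaves $\phi$, hence the whole triple $(A, \Theta, \{E, E'\})$, unchanged; this is exactly the diagonal action that acts normally on the first copy of $Y(d)$ and through the $\varepsilon$-conjugate on the second. For the generator of $\sym_2$ one uses the canonical isomorphism $(\mathcal{C}_1 \times \mathcal{C}_2)/\Gamma_\phi \cong (\mathcal{C}_2 \times \mathcal{C}_1)/\Gamma_{\phi^{-1}}$ induced by the swap of factors, which carries the product polarization to the product polarization and exchanges the two distinguished elliptic subgroups; identifying the level structures appearing on the right-hand side with the tautological ones requires a twist by $\varepsilon$, and this twist is exactly what is encoded in the semidirect product $\sym_2 \ltimes \SL_2(\Z/d)$. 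Since an object of $\hej_d$ records only the unordered pair $\{E, E'\}$ --- in the stacky sense of the footnote to Definition~\ref{unordered} --- the swapped object is canonically isomorphic to the original, and we obtain a morphism $F \colon Y(d) \times Y(d)/(\sym_2 \ltimes \SL_2(\Z/d)) \to \hej_d$.

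For the inverse, let $(A, \Theta, \{E, E'\})$ be an object of $\hej_d$ over a base $S$. \'Etale-locally on $S$ one may choose an ordering of the pair, say $(E, E')$, and a symplectic trivialization $\psi$ of $E[d]$. The geometry of $A$ furnishes a canonical isomorphism $\phi \colon E[d] \to E'[d]$ --- the one along whose graph $E$ and $E'$ meet inside $A$ --- and, as recalled before the proposition, $\phi$ inverts the Weil pairing, so that $\psi' := \phi \circ \psi \circ \varepsilon^{-1}$ is again a symplectic trivialization, this time of $E'[d]$. The pair of elliptic curves with level structures $\psi$ and $\psi'$ is an $S$-point of $Y(d) \times Y(d)$; replacing $\psi$ by $\psi g$ replaces $\psi'$ by $\psi' \varepsilon g \varepsilon^{-1}$, and switching the chosen ordering applies the generator of $\sym_2$, so this point is well defined in the quotient stack and independent of the choices. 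A direct check then shows that this construction is quasi-inverse to $F$, proving the proposition.

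The step I expect to be the main obstacle is the $\sym_2$-part of the equivariance in the second paragraph: one must carry the element $\varepsilon$ through the bookkeeping honestly, check that the resulting action is genuinely that of the semidirect product $\sym_2 \ltimes \SL_2(\Z/d)$ and not of the direct product, and confirm that the choices made in the third paragraph (the ordering of the pair and the symplectic bases) can all be made \'etale-locally, so that $F$ really is an equivalence of stacks rather than merely a bijection on isomorphism classes of geometric points. The descent of the product polarization along $\Gamma_\phi$, and its remaining principal, is the one other point requiring care, though it is entirely standard.
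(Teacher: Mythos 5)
The paper itself gives no written proof of this proposition; it is treated as an immediate consequence of the preceding paragraph, which observes that quotienting $Y(d)\times Y(d)$ by the untwisted diagonal $\sym_2\times\SL_2(\Z/d)$ parametrizes unordered pairs of elliptic curves with a symplectic linkage of their $d$-torsion, and that the $\varepsilon$-twist converts this into an anti-symplectic linkage inverting the Weil pairing, which is exactly the datum the Frey--Kani construction of a $d$-elliptic surface requires. Your proposal is precisely this argument written out, with a morphism in each direction; it takes the same route as the paper's implicit argument, only more carefully.

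The one thing you flag but should actually pin down is the choice of $\varepsilon$. In your second paragraph the swap sends $\phi=\psi_2\varepsilon\psi_1^{-1}$ to $\psi_1\varepsilon\psi_2^{-1}$, whereas the isomorphism of quotient abelian surfaces you invoke needs the swapped datum to be $\phi^{-1}=\psi_1\varepsilon^{-1}\psi_2^{-1}$; these agree precisely when $\varepsilon^2=\id$, and an analogous mismatch appears when you switch the ordering of $\{E,E'\}$ in your third paragraph. Observe that for the semidirect product $\sym_2\ltimes\SL_2(\Z/d)$ in the statement to be well-defined by conjugation by $\varepsilon$, one already needs $\varepsilon^2$ central in $\GL_2(\Z/d)$, hence $\varepsilon^2=\pm\id$; if $\varepsilon^2=-\id$ one can absorb the extra sign by composing the swap with multiplication by $-1$ on one factor, but the cleanest repair is simply to normalize $\varepsilon$ to be an involution, e.g.\ $\varepsilon=(\begin{smallmatrix} 1 & 0 \\ 0 & -1\end{smallmatrix})$. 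This costs nothing: any two matrices of determinant $-1$ differ by an element of $\SL_2(\Z/d)$, and the twisted diagonal action absorbs exactly such differences. With that normalization fixed, the bookkeeping you were worried about closes up and your argument is complete.
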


\begin{rem}The natural map from $\hej_d$ to the locus of $d$-elliptic abelian surfaces in $\A_2$ exhibits $\hej_d$ as the normalization of the $d$-elliptic locus. \cite[Corollary 3.10.]{kani94}\end{rem}

\begin{rem}When $A = \mathrm{Jac}(C)$, the composition $C \to A \to E$ is a covering of degree $d$ which is minimal in the sense that it does not factor through an isogeny; conversely, any such covering $C \to E$ induces a map $E \to \mathrm{Jac}(C)$ making the Jacobian $d$-elliptic. \end{rem}

%

%
\newcommand{\Rep}{\mathsf{Rep}}

We shall need to see how the action of $\sym_2$ on $\SL_2(\Z/d)$ defined above acts on representations of $\SL_2(\Z/d)$. 

\begin{notation}If $V$ is a representation of $\SL_2(\Z/d)$, then $V^\varepsilon$ denotes the representation obtained by conjugation by $\varepsilon \in \GL_2(\Z/d)$ of determinant $-1$. \end{notation}

It is clear that $V^{\varepsilon \varepsilon} \cong V$, and that $V^{\varepsilon}$ does not depend on the choice of $\varepsilon$ up to isomorphism.

\begin{lem}Let $p$ be a prime, $\Z_p$ the $p$-adic integers, and choose $A \in \SL_2(\Z_p)$. There is always a matrix $\varepsilon \in \GL_2(\Z_p)$ with $\det(\varepsilon) = -1$ such that $A = \varepsilon A^{-1} \varepsilon^{-1}$. \end{lem}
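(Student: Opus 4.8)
The plan is to show that any $A \in \SL_2(\Z_p)$ is conjugate to $A^{-1}$ by a matrix of determinant $-1$. First I would reduce to finding \emph{some} conjugating matrix $\varepsilon$ with $\det(\varepsilon) = -1$, using the basic fact from the theory of conjugacy in $\GL_2$: over any commutative ring, a matrix $A$ and its transpose $A^{\top}$ are conjugate, and for a $2 \times 2$ matrix one has the identity $A^{-1} = (\det A)^{-1}(\Tr A \cdot I - A)$, so since $\det A = 1$ we get $A^{-1} = \Tr(A) \cdot I - A$, which is conjugate to $A$ precisely when... hmm, wait --- that is not automatic. Let me reconsider: the cleaner route is that $A$ is always conjugate to $A^{\top}$ (true over any field, and one can lift; in fact for $2\times 2$ matrices there is the explicit antidiagonal-type intertwiner), and separately $A^{-1} = \Tr(A) I - A$ has the same characteristic polynomial as $A$ when $\Tr(A^{-1}) = \Tr(A)$, which holds since $\det A = 1$. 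So $A$ and $A^{-1}$ have the same characteristic polynomial; over $\Z_p$ (a local ring, or a PID) two $2\times 2$ matrices with the same characteristic polynomial that is \emph{separable}, or more generally with a cyclic vector, are conjugate. So generically $A \sim A^{-1}$ over $\GL_2(\Z_p)$.

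**Reducing the determinant of the conjugator.** Suppose $gAg^{-1} = A^{-1}$ for some $g \in \GL_2(\Z_p)$. I want to adjust $g$ to have determinant $-1$. The centralizer $Z(A)$ of $A$ in $\GL_2(\Z_p)$ acts on the set of such conjugators by left multiplication, so it suffices to show $\det \colon Z(A) \to \Z_p^{\times}$ has image containing $-\det(g)$, equivalently that the image of $\det$ on $Z(A)$ together with $\{\pm 1\}$ generates enough. When $A$ has a cyclic vector (the regular case), $Z(A)$ equals the set of polynomials $aI + bA$ that are invertible, i.e.\ $\Z_p[A]^{\times}$; its image under $\det$ is the image of the norm map from $\Z_p[A]^{\times}$. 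In the most generic case $\Z_p[A]$ is the ring of integers in a quadratic étale $\Z_p$-algebra and the norm is surjective onto $\Z_p^{\times}$ (local norm surjectivity for unramified extensions, and a direct check for split and ramified cases), so in particular $-1$ is a norm and we may replace $g$ by $zg$ with $\det z = -\det(g)$. The corner cases --- $A$ scalar (so $A = \pm I$, where $A = A^{-1}$ and one just takes $\varepsilon = \mathrm{diag}(1,-1)$), and $A$ with a repeated eigenvalue but non-scalar --- I would handle by hand; in the repeated-eigenvalue case $A = \pm I$ automatically since $\det A = 1$ forces the eigenvalue to be $\pm 1$ with both equal, and $\Z_p[A]$ still has plenty of units.

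**The main obstacle** I expect is the non-regular/non-separable characteristic polynomial case and the surjectivity of the norm $N\colon \Z_p[A]^{\times} \to \Z_p^{\times}$: one must rule out pathologies such as $p = 2$ where $-1$ might fail to be a norm from a ramified quadratic extension of $\Q_2$. The clean way around this is to not insist on conjugating $A$ to $A^{-1}$ by a \emph{fixed} intertwiner and then fixing the determinant, but instead to write down $\varepsilon$ directly: in a basis where $A$ is in rational canonical form $\begin{pmatrix} 0 & -1 \\ 1 & \Tr A \end{pmatrix}$, one checks that $\varepsilon = \begin{pmatrix} 1 & \Tr A \\ 0 & -1 \end{pmatrix}$ (or a similar explicit matrix) satisfies $\varepsilon A \varepsilon^{-1} = A^{-1}$ and $\det \varepsilon = -1$ by a two-line computation, using $A^{-1} = \Tr(A) I - A$. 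This sidesteps all norm-surjectivity issues and reduces the whole lemma to: (i) every $A$ (including the $\pm I$ case, handled separately) is $\GL_2(\Z_p)$-conjugate to its rational canonical form --- which follows because $A - \lambda I$ generates a cyclic module or $A$ is scalar --- and (ii) the explicit $2\times 2$ verification. I would structure the final proof around this explicit $\varepsilon$, relegating the rational-canonical-form reduction to a remark about cyclic vectors for $2 \times 2$ matrices over the local ring $\Z_p$.
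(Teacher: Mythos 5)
Your second (``clean'') route has a genuine gap. Not every $A \in \SL_2(\Z_p)$ is $\GL_2(\Z_p)$-conjugate to the companion matrix of its characteristic polynomial: for instance
\[
A = \begin{pmatrix} 1 & p \\ 0 & 1 \end{pmatrix}
\]
reduces to the identity mod $p$, while the companion matrix of $(t-1)^2$ does not, so they cannot be conjugate over $\Z_p$. Equivalently, $\Z_p^2$ is \emph{not} a cyclic $\Z_p[A]$-module here ($\det(v \mid Av) = -p y^2$ is never a unit). This is also where your "repeated eigenvalue forces $A = \pm I$" step fails -- this $A$ has repeated eigenvalue $1$ but is not $\pm I$. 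So the reduction to rational canonical form, and therefore your explicit $2\times 2$ verification, does not cover all cases; in fact the hardest case ($A \equiv \pm I \pmod p$) is exactly the one left out.

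Your first route (find any conjugator $g_0$, then correct $\det g_0$ by an element of the centralizer $Z(A)$) also has a hole. For the same $A$ above, $Z(A) = \{ \left(\begin{smallmatrix} a & b \\ 0 & a \end{smallmatrix}\right) \}$, so $\det Z(A)$ consists only of squares in $\Z_p^\times$. When $p \equiv 3 \pmod 4$, $-1$ is not a square, so you cannot flip the sign of $\det g_0$ by multiplying by something in $Z(A)$: the determinants of all conjugators of $A$ onto $A^{-1}$ lie in a single coset $(\det g_0)\cdot(\Z_p^\times)^2$, and whether that coset is $+(\Z_p^\times)^2$ or $-(\Z_p^\times)^2$ is precisely what needs to be decided. (The paper's remark after the lemma records that the answer is $-(\Z_p^\times)^2$, i.e.\ the statement would be false if one demanded $\det\varepsilon = 1$.) So ``adjusting the determinant afterwards'' genuinely cannot work; one must control the determinant class of the conjugator from the start.

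The paper's proof takes a different, more elementary tack that sidesteps both problems: it parametrizes candidate conjugators by trace-zero matrices $\varepsilon = \left(\begin{smallmatrix} x & y \\ z & -x \end{smallmatrix}\right)$, for which $\varepsilon A = A^{-1}\varepsilon$ reduces to the single linear equation $(a-d)x + cy - bz = 0$ (using $A^{-1} = \operatorname{adj}(A)$ for $\det A = 1$). After rescaling this equation so that one coefficient is a unit (this is exactly how the $A \equiv \pm I \pmod p$ case is absorbed), one solves the linear equation together with the determinant condition $-x^2 - yz = -1$ directly, invoking Hensel's lemma in the one nontrivial subcase. You would need to either adopt a direct construction of this kind, or supply a separate argument for the non-cyclic (equivalently, $A \equiv \pm I \pmod p$) cases, to make your approach go through.
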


\begin{proof}Let $A = (\begin{smallmatrix} a & b \\ c & d \end{smallmatrix})$ and put $\varepsilon = (\begin{smallmatrix} x & y \\ z & -x \end{smallmatrix})$. 
One checks that 
\[ \begin{pmatrix} x & y \\ z & -x \end{pmatrix}\begin{pmatrix} a & b \\ c & d \end{pmatrix} = \begin{pmatrix} d & -b \\ -c & a \end{pmatrix}\begin{pmatrix} x & y \\ z & -x \end{pmatrix} \] 
holds if and only if
\[ (a-d)x + cy - bz = 0.\]
By multiplying this equation by an appropriate factor $p^\lambda$, we may assume that at least one of $(a-d)$, $b$ and $c$ is a $p$-adic unit. (If not, $A$ is plus or minus the identity and we are done.) If $b$ is a unit, we put $x = 1$, $y = 0$ and $z = b^{-1}(a-d)$. Then $\det(\varepsilon) = -1$ and we are done. Similarly if $c$ is a unit. If $(a-d)$ is a unit and $b$ and $c$ are not, then substitute 
\[ x = (a-d)^{-1}(bz-cy)\]
into the equation 
\[ -x^2 - yz = -1\]
for the determinant. If $p> 2$, then reducing the resulting equation modulo $p$ gives the equation $- yz = -1$.  For $p=2$ we must reduce modulo $8$, and find
\[4\delta_1 y^2 + 4\delta_2 z^2 - yz = -1\]
for some $\delta_1,\delta_2 \in \{0,1\}$. Either way one can now check that there is a solution in $\Z_p$ by a version of Hensel's lemma. 
\end{proof}

\begin{rem}The proposition is false if we instead put the condition $\det(\varepsilon) = 1$. For instance, the matrix $(\begin{smallmatrix} 1 & 1 \\ 0 & 1\end{smallmatrix})$ is not conjugate in $\SL_2(\Z_p)$ to its inverse if $p \equiv 3 \pmod 4$. \end{rem}

\begin{prop}\label{sl2}Let $V$ be any representation of $\SL_2(\Z/n)$. Then $V^\varepsilon$ is isomorphic to the dual (contragredient) of $V$. \end{prop}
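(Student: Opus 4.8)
The plan is to reduce Proposition \ref{sl2} to the previous lemma via a standard character-theoretic argument. First I would recall that a complex (or rational) representation of a finite group is determined up to isomorphism by its character, and that the character of the contragredient $V^\vee$ is $g \mapsto \chi_V(g^{-1})$, while the character of $V^\varepsilon$ is $g \mapsto \chi_V(\varepsilon^{-1} g \varepsilon)$. Hence it suffices to show that for every $g \in \SL_2(\Z/n)$ the elements $g^{-1}$ and $\varepsilon^{-1} g \varepsilon$ are conjugate in $\SL_2(\Z/n)$ — equivalently, that every $g \in \SL_2(\Z/n)$ is conjugate \emph{in} $\SL_2(\Z/n)$ to its own inverse. (Note the subtlety: a priori $\varepsilon$ lies only in $\GL_2$, so $\varepsilon^{-1} g \varepsilon$ need not visibly be $\SL_2$-conjugate to $g^{-1}$; but since $\varepsilon^{-1}g\varepsilon$ and $g^{-1}$ have the same characteristic polynomial and we will show $g \sim g^{-1}$ in $\SL_2$, the independence of the choice of $\varepsilon$ already noted in the text closes this gap once we produce one good $\varepsilon$.)

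Next I would lift the problem to $\Z_p$. Write $n = \prod_i p_i^{e_i}$; by CRT, $\SL_2(\Z/n) \cong \prod_i \SL_2(\Z/p_i^{e_i})$ and a representation decomposes compatibly, so it is enough to treat $n = p^e$ a prime power. Given $g \in \SL_2(\Z/p^e)$, lift it to some $A \in \SL_2(\Z_p)$ (possible since $\SL_2(\Z_p) \to \SL_2(\Z/p^e)$ is surjective). Apply the Lemma to obtain $\varepsilon \in \GL_2(\Z_p)$ with $\det \varepsilon = -1$ and $A = \varepsilon A^{-1}\varepsilon^{-1}$. Reducing mod $p^e$ gives $\bar\varepsilon \in \GL_2(\Z/p^e)$ with $\det \bar\varepsilon = -1$ and $g = \bar\varepsilon\, g^{-1}\, \bar\varepsilon^{-1}$. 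Since the class of $V^\varepsilon$ does not depend on the chosen determinant-$(-1)$ element, we may use this $\bar\varepsilon$ to compute $V^\varepsilon$, and then $\chi_{V^\varepsilon}(g) = \chi_V(\bar\varepsilon^{-1} g\bar\varepsilon) = \chi_V(g^{-1}) = \chi_{V^\vee}(g)$ for all $g$, whence $V^\varepsilon \cong V^\vee$.

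The only real subtlety to address carefully is the one already flagged in parentheses: matching the \emph{same} $\varepsilon$ across all elements $g$ simultaneously. The clean way is to phrase things via characters as above — the character identity $\chi_{V^\varepsilon} = \chi_{V^\vee}$ is checked one conjugacy class at a time, and for each class we are free to choose whichever determinant-$(-1)$ conjugator $\varepsilon_g$ the Lemma hands us, because $V^\varepsilon$ is independent of that choice (as remarked after the Notation). So there is no need for a single global $\varepsilon$ conjugating every element to its inverse at once. I expect this bookkeeping — being explicit that "independence of $\varepsilon$" lets us vary the conjugator per conjugacy class — to be the main (and essentially only) point requiring care; the prime-power reduction and the character computation are routine. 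For rational coefficients one notes additionally that $V$ and $V^\vee$ over $\Q$ are still determined by their characters, so the argument is unaffected.
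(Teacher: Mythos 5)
Your proof is correct and takes essentially the same route as the paper: reduce to prime powers via CRT, argue on characters, and invoke the Lemma to see that $g^{-1}$ and the $\varepsilon$-conjugate of $g$ lie in the same $\SL_2$-conjugacy class. The two points you flag — lifting from $\Z/p^e$ to $\Z_p$ and the freedom to vary the conjugator per conjugacy class (or, equivalently, composing with $\varepsilon\varepsilon_g\in\SL_2$) — are exactly the details the paper leaves implicit, and you resolve them correctly.
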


\begin{proof}By the Chinese remainder theorem, we may assume that $n = p^\lambda$ is a prime power. Let $\chi$ be the character of $V$. Then the character of its dual is $g \mapsto \chi(g^{-1})$, and the character of $V^\varepsilon$ is $g \mapsto \chi(\varepsilon g \varepsilon^{-1})$. But $g^{-1}$ and $\varepsilon g \varepsilon^{-1}$ lie in the same conjugacy class of $\SL_2(\Z/p^\lambda)$ by the preceding lemma, so the two characters coincide. \end{proof}
\newcommand{\PGL}{\mathrm{PGL}}
\newcommand{\PSL}{\mathrm{PSL}}
\begin{ex}Let $p$ be an odd prime. In this case one can quite easily see the preceding proposition concretely from the character table of $\SL_2(\Z/p)$. Note that an element $\varepsilon \in \GL_2(\Z/p)$ can act nontrivially  by conjugation on the representations of $\SL_2(\Z/p)$ only if it is nonzero in $\PGL_2(\Z/p)/\PSL_2(\Z/p) \cong \sym_2$, i.e.\ when $\det(\varepsilon)$ is a nonsquare in $\Z/p$. The character table of $\SL_2(\Z/p)$ is constructed in \cite[Section 5]{fh91}. Their construction also shows that all but four exceptional irreducible representations are restrictions of representations from $\GL_2(\Z/p)$, hence invariant under conjugation and isomorphic to their duals (since every element of $\GL_2(\Z/p)$ is conjugate to its inverse). The remaining four occur when restrictions from $\GL_2(\Z/p)$ split into two irreducibles under restrictions, so they are pairwise switched by conjugation by $\varepsilon$ precisely when $\det(\varepsilon)$ is a nonsquare. On the other hand the entries of the character table for these four representations contain a square root of the Legendre symbol ${( \frac {-1} p) }$ as their only potentially non-real entries.\end{ex}


\section{Branching formulas}

\renewcommand{\V}{V}
\renewcommand{\R}{U}
\renewcommand{\W}{W}
Recall that irreducible finite-dimensional representations of $\Sp_{2g}$ are indexed by their highest weight, which is a decreasing sequence $l_1 \geq \cdots \geq l_g \geq 0$ of integers. 
The corresponding irreducible representation appears for the ``first'' time inside 
\[ \Sym^{l_1-l_2}(\wedge^1 V) \otimes \Sym^{l_2-l_3}(\wedge^2 V)\otimes \cdots \otimes \Sym^{l_{g-1}-l_g}(\wedge^{g-1} V)\otimes \Sym^{l_g}(\wedge^g V), \]
where $V$ is the defining $2g$-dimensional representation of $\Sp_{2g}$. For example, the weight vector $l \geq 0 \geq \cdots \geq 0$ corresponds to the irreducible representation $\Sym^l V$. In particular, all irreducible representations of $\Sp_2$ are symmetric powers of the defining one. 

\begin{notation}We denote the irreducible representation of $\Sp_4$ with highest weight $l\geq m\geq 0$ by $W_{l,m}$. For integers $l,m$ which do not satisfy $l \geq m \geq 0$, we put $\W_{l,m}=0$. We similarly index the irreducible representations of $\Sp_2$ as $V_a$. \end{notation}

The wreath product $\Sp_2 \wr \sym_2 = (\Sp_2\times \Sp_2)\rtimes \sym_2$ embeds naturally in $\Sp_4$ as the subgroup which preserves a decomposition of a 4-dimensional symplectic vector space into a sum of two unordered symplectic subspaces. We now determine a branching rule for this inclusion. First we need a description of the irreducible representations of $\Sp_2 \wr \sym_2$.

\begin{defn}Let $a, b \in \Z$. Define $\R_{a,b}$ to be the representation of $\Sp_2 \wr \sym_2$ given by 
\[ \V_a \otimes \V_b \oplus \V_b \otimes \V_a \]
as a representation of $\Sp_2 \times \Sp_2$, with an $\sym_2$-action given by 
\[ \sigma(x \otimes y,y' \otimes x') = (x' \otimes y',y \otimes x), \]
where $\sigma = (12) \in \sym_2$. Secondly, for any $a \ge 0$, we define two representations $\R_a^+$ and $\R_a^{-}$ by giving
the $\Sp_2 \times \Sp_2$-representation $ \V_a \otimes \V_a $ the $\sym_2$-actions
\[ \sigma(x \otimes y) = y \otimes x \quad \text{ and } \quad \sigma(x \otimes y) = -y \otimes x,\]
respectively.  \end{defn}

\begin{prop} The representations $\R_{a,b}$, where $a \neq b$, and $\R_a^\pm$ are the only irreducible representations of $\Sp_2 \wr \sym_2$.  \end{prop}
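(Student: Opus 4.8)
The plan is to use the standard theory of representations of wreath products (Clifford theory), applied to the normal subgroup $\Sp_2 \times \Sp_2 \trianglelefteq \Sp_2 \wr \sym_2$ of index $2$. Since $\Sp_2$ is a connected reductive group whose irreducible representations are exactly the $V_a$ for $a \geq 0$, the irreducible representations of $\Sp_2 \times \Sp_2$ are precisely the external tensor products $V_a \otimes V_b$ for $a, b \geq 0$. The $\sym_2$-action permutes these by swapping the factors, so $V_a \otimes V_b$ and $V_b \otimes V_a$ lie in the same orbit, and this orbit has size $2$ when $a \neq b$ and size $1$ when $a = b$.

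First I would recall (or cite) the general principle: if $G$ is a group with a normal subgroup $N$ of prime index $p$ with quotient generated by $\sigma$, then every irreducible representation of $G$ restricts to $N$ either as an irreducible not isomorphic to its $\sigma$-conjugate, or as a direct sum of $p$ copies of conjugate irreducibles in a single orbit, or (when the orbit is trivial, i.e. the $N$-irreducible $\rho$ is $\sigma$-stable) as a single irreducible $\rho$ extended to $G$ in $p$ distinct ways differing by characters of $G/N$. Conversely all irreducibles of $G$ arise this way, and there is a bijection between $\mathrm{Irr}(G)$ and pairs consisting of a $\sigma$-orbit on $\mathrm{Irr}(N)$ together with (in the stable case) a character of the stabilizer quotient. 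For $p = 2$ and $G/N \cong \sym_2$: each orbit $\{V_a \otimes V_b, V_b \otimes V_a\}$ with $a \neq b$ gives rise to a single induced irreducible of dimension $2\dim V_a \dim V_b$, which is exactly $U_{a,b}$ as defined; and each fixed point $V_a \otimes V_a$ gives rise to exactly two irreducibles, the two extensions to $\Sp_2 \wr \sym_2$, which differ by the sign character of $\sym_2$ — these are exactly $U_a^+$ and $U_a^-$.

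Concretely, the key steps I would carry out are: (1) verify $U_{a,b}$ (for $a \neq b$) is irreducible, e.g. by noting $\mathrm{Ind}_{\Sp_2\times\Sp_2}^{\Sp_2 \wr \sym_2}(V_a \otimes V_b) \cong U_{a,b}$ and applying Mackey's irreducibility criterion: the restriction back to $\Sp_2 \times \Sp_2$ is $V_a \otimes V_b \oplus V_b \otimes V_a$, a sum of two non-isomorphic $\Sp_2 \times \Sp_2$-irreducibles permuted transitively by $\sigma$, which forces the induced module to be irreducible; (2) verify $U_a^\pm$ are irreducible, which is immediate since their restriction to $\Sp_2 \times \Sp_2$ is the irreducible $V_a \otimes V_a$, so any nonzero $\Sp_2 \wr \sym_2$-subrepresentation already equals the whole space; (3) check that $U_{a,b} \cong U_{b,a}$ and that the listed representations are pairwise non-isomorphic (distinguish by restriction to $\Sp_2 \times \Sp_2$, and distinguish $U_a^+$ from $U_a^-$ by the $\sigma$-action, e.g. by the trace of $\sigma$ acting on the representation); (4) a counting/completeness argument to show there are no others — either invoke the Clifford-theoretic classification directly, or argue that the sum over all $(a,b)$ with $a\neq b$ and all $(a,\pm)$ exhausts $\mathrm{Irr}(\Sp_2\wr\sym_2)$ because every irreducible of $\Sp_2 \wr \sym_2$ has a nonzero $\Sp_2\times\Sp_2$-isotypic component lying in one such orbit, and Clifford theory pins down exactly which $G$-irreducibles sit above a given orbit.

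The main obstacle is step (4), the completeness statement, since $\Sp_2 \wr \sym_2$ is not finite and one must be slightly careful that Clifford theory applies verbatim: the cleanest route is to observe that all representations in sight are algebraic and factor through the identity component issue harmlessly (the wreath product has two components), so it suffices to work with the finite quotient relevant to any given irreducible, or equivalently to invoke the fact that for an algebraic group $G$ with reductive identity component $G^0 = \Sp_2 \times \Sp_2$ and $G/G^0 = \sym_2$, the classification of irreducible algebraic representations proceeds exactly as in the finite case via the $\sym_2$-action on $\mathrm{Irr}(G^0)$. Once this is granted, completeness follows formally, and the only remaining check is the purely bookkeeping one that the three families listed account for the three cases (non-trivial orbit; trivial orbit with trivial extension; trivial orbit with sign extension) of the dichotomy.
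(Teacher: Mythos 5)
Your proposal is correct and follows essentially the same approach as the paper: induce irreducible representations from the index-two normal subgroup $\Sp_2 \times \Sp_2$ and invoke the Clifford/Mackey theory for semidirect products, distinguishing the cases $a\neq b$ (single induced irreducible $U_{a,b}$) and $a=b$ (two extensions $U_a^{\pm}$). The paper states this very tersely, simply citing ``the representation theory of a semidirect product,'' while you spell out the Mackey irreducibility criterion, the non-isomorphism checks, and the caveat about carrying Clifford theory over to a non-finite group with reductive identity component and finite component group — all correct elaborations of the same argument rather than a different route.
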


\begin{proof} The representation theory of a semidirect product tells us that every irreducible representation of $\Sp_2 \wr \sym_2$ occurs in a representation induced from an irreducible of $\Sp_2 \times \Sp_2$. The irreducible $\V_{a}\otimes \V_b$ induces to $\R_{a,b}$, whereas the irreducible $\V_a \otimes \V_a$ induces to the sum $\R_{a,a}=\R_a^+ \oplus \R_a^{-}$. \end{proof}


\begin{prop} \label{branch1} Assume $l+m$ is even. Then the restriction of the representation $\W_{l,m}$ of $\Sp_4$ decomposes as 
\[ \Res_{\Sp_2\wr \sym_2}^{\Sp_4} W_{l,m}= \bigoplus_{0\leq i \leq m} \left( \R_{(l-m)/2 + i}^{(-1)^m} \oplus  \bigoplus_{0 \leq j < (l-m)/2}\R_{l-m+i-j,i+j} \right), \]
where $(-1)^m$ denotes `+' if $m$ is even and `--' if $m$ is odd. If $l+m$ is odd, then 
\[ \Res_{\Sp_2\wr \sym_2}^{\Sp_4} W_{l,m}= \bigoplus_{0\leq i \leq m} \bigoplus_{0 \leq j < (l-m)/2}\R_{l-m+i-j,i+j}. \]
\end{prop}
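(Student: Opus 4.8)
The plan is to compute the restriction via characters, using the maximal torus. Recall that $\Sp_4$ has a rank-two torus, and the character of $W_{l,m}$ is given by the Weyl character formula; equivalently, writing $x_1, x_2$ for the torus coordinates adapted to the decomposition $V = V' \oplus V''$ into the two symplectic planes, the character of $W_{l,m}$ restricted to the torus of $\Sp_2 \times \Sp_2$ is a symmetric (under the Weyl group $(\Z/2)^2 \rtimes \sym_2$) Laurent polynomial in $x_1, x_2$. First I would write down this restricted character explicitly. The cleanest route is to use the fact that $W_{l,m}$ appears inside $\Sym^{l-m}(\wedge^1 V) \otimes \Sym^{m}(\wedge^2 V)$ with the other constituents being $W_{l',m'}$ for strictly smaller $(l',m')$ (in the dominance order), so one can determine the characters of all $W_{l,m}$ inductively; alternatively one invokes a known branching formula for $\Sp_4 \downarrow \Sp_2 \times \Sp_2$ (e.g.\ via $\Sp_4 \supset \Sp_2 \times \Sp_2$ being a symmetric subgroup, or through $\mathfrak{so}_5 \cong \mathfrak{sp}_4$ and classical $\mathrm{SO}$ branching). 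Either way, the $\Sp_2 \times \Sp_2$-decomposition of $W_{l,m}$ should come out as $\bigoplus V_a \otimes V_b$ over a set of pairs $(a,b)$ that I will want to match with the index set appearing in the statement.

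The key bookkeeping step is then to organize this $\Sp_2 \times \Sp_2$-decomposition into $\sym_2$-orbits and identify the induced/invariant pieces. Concretely: pairs $(a,b)$ with $a \neq b$ come in $\sym_2$-swapped partners $\{(a,b),(b,a)\}$, and each such partner pair assembles into one copy of $\R_{a,b}$ (this uses the description of $\R_{a,b}$ as the induced representation $\mathrm{Ind}(\V_a \otimes \V_b)$ from the Proposition just proved). Diagonal pairs $(a,a)$ give a copy of $\V_a \otimes \V_a$ which, as a $\sym_2$-representation, splits as $\R_a^+ \oplus \R_a^-$; to decide \emph{which} sign occurs I would compute the trace of the element $\sigma = (12) \in \sym_2$ (or of $\sigma$ times a torus element) acting on $W_{l,m}$. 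The sign on the diagonal terms is governed by the action of $\sigma$, and a clean way to pin it down is to evaluate the character of $W_{l,m}$ on the "flip" element: the involution $\sigma$ acts on $V' \oplus V''$ by swapping the two planes, and its trace on $\Sym^{l-m}(\wedge^1 V) \otimes \Sym^m(\wedge^2 V)$ is computable and should produce the factor $(-1)^m$. This is where the parity hypothesis on $l+m$ enters: when $l+m$ is odd, the element $-\id$ of $\Sp_4$ (which lies in the connected subgroup and acts by $(-1)^{l+m} = -1$ on $W_{l,m}$) interacts with $\sigma$ so as to kill the diagonal contributions entirely, leaving only the $\R_{a,b}$ terms — I would make this precise by noting that $-\id \in \Sp_2 \wr \sym_2$ acts by $-1$ on every $\R_a^\pm$ when... actually by checking that $-\id$ acts by $(-1)^{2a} = +1$ on $\R_a^\pm$, so these cannot appear in a representation on which $-\id$ acts by $-1$; hence when $l+m$ is odd there are no diagonal terms, and moreover every $(a,b)$ appearing must have $a + b$ odd hence $a \neq b$ automatically.

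Finally I would reconcile the resulting multiset of $\R_{a,b}$'s and $\R_a^\pm$'s with the double sum in the statement. The inner index $j$ with $0 \leq j < (l-m)/2$ together with $0 \leq i \leq m$ should be exactly a parametrization of the lattice points $(a,b) = (l-m+i-j,\, i+j)$ occurring with the claimed multiplicities, and the diagonal locus $a = b$ corresponds to the "boundary" values $j = (l-m)/2$ excluded from the inner sum and collected separately into the $\R^{(-1)^m}_{(l-m)/2+i}$ terms. The main obstacle is the sign determination for the diagonal summands — everything else is a character computation plus reindexing — so I would spend most of the care on the $\sigma$-trace calculation and on verifying that the parity of $l+m$ correctly toggles the presence of the $\R_a^\pm$ family. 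As a consistency check I would compare dimensions: $\dim W_{l,m} = \frac{1}{6}(l-m+1)(m+1)(l+2)(l+m+3)$ should equal the sum of $2\dim(V_a)\dim(V_b) = 2(a+1)(b+1)$ over the off-diagonal pairs plus $\dim(V_a)^2 = (a+1)^2$ over the diagonal ones, which gives a clean numerical test of the final formula.
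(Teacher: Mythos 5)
Your approach is a genuinely different route from the paper's, and it works. The paper argues by induction on $m$: the base case is the observation that the restriction of $W_{l,0}$ is $\Sym^l(V\oplus V)=\bigoplus_{i+j=l}V_i\otimes V_j$, and the inductive step combines Pieri's rule $W_{l,m}\otimes W_{1,0}=W_{l,m+1}\oplus W_{l+1,m}\oplus W_{l,m-1}\oplus W_{l-1,m}$ with the elementary formulas $U_{a,b}\otimes U_{1,0}=U_{a+1,b}\oplus U_{a,b+1}\oplus U_{a-1,b}\oplus U_{a,b-1}$ and $U_a^{\pm}\otimes U_{1,0}=U_{a+1,a}\oplus U_{a,a-1}$, so that the whole proof reduces to checking that both sides of the asserted identity satisfy the same recursion. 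You instead compute the $\Sp_2\times\Sp_2$ branching directly and then sort the constituents into $\sym_2$-orbits, pinning down the sign on the diagonal pieces from the trace of $\sigma$. Your observation that the central element $-\id\in\Sp_4$ acts by $(-1)^{l+m}$ on $W_{l,m}$ but trivially on every $U_a^{\pm}$ (and by $(-1)^{a+b}$ on $U_{a,b}$) gives a clean conceptual explanation of the $l+m$-parity dichotomy in the statement, which the paper's induction reproduces but does not illuminate. The price is that in the even case you still owe the twisted character value $\chi_{W_{l,m}}(\sigma t)$ for a torus element $t$ in order to extract the sign $(-1)^m$; you correctly flag this as the main technical point but leave it uncomputed (and note that $W_{l,m}$ is only a summand of $\Sym^{l-m}V\otimes\Sym^m(\wedge^2 V)$, so isolating its twisted trace needs care), whereas the Pieri induction never confronts this directly --- the sign propagates automatically from the $m=0$ base case. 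Both routes are valid: yours buys conceptual clarity about why the diagonal summands vanish when $l+m$ is odd, while the paper's buys a uniform, essentially formal inductive structure.
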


\begin{proof}We prove this by induction on $m$. Consider first $m=0$. It is clear that the restriction of $W_{1,0}=W$ is the representation $\R_{1,0} = V \oplus V$. The restriction of $W_{l,0}$ is then $\Sym^l (V \oplus V) = \bigoplus_{i+j=l}V_i \otimes V_j$, which agrees with the formula above.

%
%
For the induction step, we use the formula
\[ \W_{l,m}\otimes W_{1,0} = W_{l,m+1} \oplus W_{l+1,m} \oplus W_{l,m-1} \oplus W_{l-1,m},\]
a special case of Pieri's rule for the symplectic group. (This rule states that instead of adding a horizontal $k$-strip in all possible ways, as one would do for $\GL_n$, one should first remove a vertical $i$-strip and then add a horizontal $(k-i)$-strip, for all $0 \leq i \leq k$.) One can then prove that the right hand sides in the statement of Proposition \ref{branch1} satisfy the same behavior upon tensoring with $\R_{1,0}$, as it is easy to see that 
\[ U_{a,b}\otimes U_{1,0} = U_{a+1,b} \oplus U_{a,b+1} \oplus U_{a-1,b} \oplus U_{a,b-1} \]
(where $U_{a,a} = U_a^{+}\oplus U_a^{-}$) and 
\[ U_{a}^{\pm} \otimes U_{1,0} = U_{a+1,a} \oplus U_{a,a-1}.\]
This finishes the proof. \end{proof}

\begin{rem}If we were not interested in a closed formula but only in being able to compute the branching algorithmically, we could also have argued as Bergstr\"om and van der Geer do in \cite[Section 7]{bergstromvandergeer} for $\Sp_2 \wr \sym_3 \hookrightarrow \Sp_6$. \end{rem}


When studying bielliptic curves, we will need a second branching formula, now for $\Sp_2 \times \sym_2$ sitting diagonally inside $\Sp_2 \wr \sym_2$.

Branching for $\R_{a,b}$ is easy: as it consists of two isomorphic $\sym_2$-invariant and anti-invariant parts, when restricted to the diagonal we find an invariant and an anti-invariant copy of $\V_a \otimes \V_b$. 

To describe the $\R_a^{\pm}$, let us introduce some notation. Let $\V_a^{+}$ and $\V_a^{-}$ denote the representation $\V_a$ tensored with the trivial and sign representations of $\sym_2$, respectively. The representations $\V_a$ of $\Sp_2$ are multiplied according to the rule
\[ \V_a \otimes \V_b = \V_{a+b} \oplus \V_{a+b-2} \oplus \cdots \oplus \V_{a-b} \]
if $a \ge b$ by  \cite[Chapter  11]{fh91}. When $a=b$, we find that the summands on the right hand side alternate between the trivial and sign representations:

\begin{prop} \label{diagbranch}The representations $\R_a^\pm$ decompose as
\[ \Res^{\Sp_2 \wr \sym_2}_{\Sp_2 \times \sym_2}\R_a^{+} = \bigoplus_{k=0}^a \V_{2a}^{(-1)^k}\]
and
\[  \Res^{\Sp_2 \wr \sym_2}_{\Sp_2 \times \sym_2} \R_a^{-} = \bigoplus_{k=0}^a \V_{2a}^{(-1)^{k+1}},\]
respectively, where again $(-1)^k$ denotes `+' if $k$ is even and `--' if $k$ is odd. \end{prop}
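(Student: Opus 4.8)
The plan is to decompose $\R_a^{\pm}$ as a representation of $\Sp_2 \times \sym_2$ by combining two pieces of information: the underlying $\Sp_2$-decomposition of $\V_a \otimes \V_a$, which is $\bigoplus_{k=0}^{a} \V_{2a-2k} = \V_{2a}\oplus \V_{2a-2}\oplus \cdots \oplus \V_0$ by the Clebsch--Gordan rule \cite[Chapter 11]{fh91}, and the $\sym_2$-action on each summand. Since each $\V_{2a-2k}$ occurs with multiplicity one, the swap involution $\sigma$ acts on it as a scalar $\pm 1$, so the only issue is to pin down these signs. First I would do this for $\R_a^{+}$, where $\sigma(x\otimes y) = y \otimes x$ is genuinely the flip on $\V_a \otimes \V_a$; the claim for $\R_a^{-}$ then follows immediately, since tensoring with the sign character of $\sym_2$ flips every sign, giving $\V_{2a-2k}^{(-1)^{k+1}}$.

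To identify the sign on the top summand $\V_{2a}$: this is the ``Cartan'' piece, spanned by $v^{\otimes 2}$ for $v$ a highest weight vector, and $\sigma$ fixes $v \otimes v$, so the sign is $+1$, i.e.\ $k=0$ gives $\V_{2a}^{+}$, consistent with $(-1)^0 = +$. For the general summand I would invoke the standard fact that in the decomposition of $\Sym^2$ versus $\wedge^2$ of a tensor product — or more directly, in $\V_a \otimes \V_a$ for $\Sp_2 = \SL_2$ — the flip acts on $\V_{2a-2k}$ by $(-1)^{k}$; equivalently, $\Sym^2(\V_a) = \bigoplus_{k \text{ even}} \V_{2a-2k}$ and $\wedge^2(\V_a) = \bigoplus_{k \text{ odd}} \V_{2a-2k}$. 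This is classical for $\SL_2$ (see e.g.\ the discussion of symmetric/exterior squares of $\SL_2$-representations in \cite[Chapter 11]{fh91}), and it is exactly the statement that the summands ``alternate between the trivial and sign representations'' as already noted in the paragraph preceding the proposition.

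Alternatively, and perhaps cleaner as a self-contained argument, I would prove the alternation by induction on $a$ using the Pieri-type recursion already established in the proof of Proposition~\ref{branch1}, namely $\R_a^{\pm}\otimes \R_{1,0} = \R_{a+1,a} \oplus \R_{a,a-1}$, together with $\R_{a,a-1}$ restricting to $\V_a \otimes \V_{a-1}$ with one trivial and one sign copy, and $\R_{a+1,a} = \R_{a+1}^{+}\oplus \R_{a+1}^{-}$. Restricting both sides to $\Sp_2 \times \sym_2$ and multiplying by $\V_1$ on the $\Sp_2$ side, the recursion $\V_{2a}\otimes \V_1 = \V_{2a+1}\oplus\V_{2a-1}$ does not directly see the even/odd splitting, so instead I would compare dimensions of $\sym_2$-invariants: $\dim (\R_a^{+})^{\sym_2} = \dim \Sym^2(\V_a) = \binom{a+1}{2} + (a+1) = \binom{a+2}{2}$, which matches $\sum_{k \text{ even}, 0\le k \le a}(2a-2k+1)$, and this multiplicity count, refined weight space by weight space, forces the sign pattern $(-1)^k$ on $\V_{2a-2k}$.

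The main obstacle is purely bookkeeping: making sure the indexing convention matches, i.e.\ that the $(k+1)$-th summand (counting $k$ from $0$) is $\V_{2a-2k}$ but the stated formula writes every summand as $\V_{2a}$. I believe this is a typo in the statement and the intended reading is $\bigoplus_{k=0}^{a}\V_{2a-2k}^{(-1)^k}$; I would note this and prove that version. Modulo this, there is essentially no hard content — the result is an immediate consequence of the classical $\SL_2$ fact that the flip on $\V_a^{\otimes 2}$ acts as $(-1)^k$ on the $\V_{2a-2k}$-isotypic line, which is why I expect the argument via $\Sym^2/\wedge^2$ to be the shortest route.
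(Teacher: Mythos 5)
Your proposal is correct and takes essentially the same route as the paper: the paper also reduces to $\R_a^{+}$, decomposes $\V_a\otimes\V_a$ into one-dimensional weight spaces $E_i\otimes E_j$ for the Cartan element $H$, reads off the $\sym_2$-action on each (trivial on $E_i\otimes E_i$, split into $\pm$ on $E_i\otimes E_j\oplus E_j\otimes E_i$), and matches this against $\bigoplus_{k}\V_{2(a-k)}^{(-1)^k}$ — which is exactly your weight-space-refined ``alternative'' argument, and is also the content of the classical $\Sym^2$/$\wedge^2$ alternation fact you cite in your first route. You are also right that the statement has a typo: the summands should read $\V_{2a-2k}$ (equivalently $\V_{2(a-k)}$, as written correctly in the paper's own proof), not $\V_{2a}$.
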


\begin{proof}It suffices to consider $\R_a^{+}$. We will begin by decomposing the representation $\V_a$ into weight spaces. 
The case of $\Sp_2$ is particularly simple as we can replace our Cartan subalgebra with a single element,
\[ H = \begin{pmatrix} 1 & 0 \\ 0 & -1 \end{pmatrix} \in \mathfrak{sp}_2. \]Decomposing $\V_a$ into 1-dimensional eigenspaces for $H$, one finds \cite[Chapter  11]{fh91}
\[ \V_a = E_{a} \oplus E_{a-2} \oplus \cdots \oplus E_{-a}. \]
Since $\R_a^{+} = \V_a \otimes \V_a$ as an $\Sp_2$-representation, we get a similar eigenspace decomposition of $\R_a^{+}$ into a sum of copies of $E_i \otimes E_j$. We see that $\sym_2$ acts trivially on all eigenspaces of the form $E_i \otimes E_i$, whereas the subspaces of the form $E_i \otimes E_j \oplus E_j \otimes E_i$, $i \neq j$, split into two isomorphic subspaces with trivial and sign representation of $\sym_2$ respectively. This determines the decomposition of $\R_a^{+}$ into one-dimensional eigenspaces of $H$, along with their $\sym_2$-actions. Now one checks that the sum \[ \bigoplus_{k=0}^a \V_{2(a-k)}^{(-1)^k}\] has the same decomposition.  This determines the representations uniquely.
\end{proof}

\section{The relevant local systems}

\renewcommand{\W}{\mathbf{W}}
\renewcommand{\V}{\mathbf{V}}
\renewcommand{\R}{\mathbf{U}}

\begin{defn}Let $\W$ denote the \emph{standard local system} on $\A_2$, defined by
\[ \W = \mathrm{R}^1\pi_\ast \Q, \]
where $\pi : \mathcal{X} \to \A_2$ is the universal abelian surface. \end{defn}

Since $\pi$ is a smooth projective morphism, there is a natural variation of Hodge structure on $\W$. The fiber of $\W$ over a point $[(A,\Theta)]$ is canonically isomorphic to the $4$-dimensional symplectic vector space $H^1(A)$. The local system $\W$ can also be defined via the inclusion 
\[ \pi_1^{\mathrm{orb}}(\A_2) = \Sp_4(\Z) \subset \Sp_4 \]
and the natural action of $\Sp_4$ on $H^1(A)$. 

By pulling back $\W$ along the map $\hh_d \to \A_2$, we get a local system on $\hh_d$ which will also be denoted $\W$. There is a second natural way of writing down such a local system. There is an obvious forgetful map 
\[ \hh_d \to \A_1 \times \A_1 / \sym_2 \]
obtained by forgetting the isomorphism $\phi$ of $d$-torsion groups. Since $\A_1 \times \A_1 / \sym_2$ sits inside $\A_2$ as the complement of $\M_2$, we can pull back the local system $\W$ on $\A_2$ to $\hh_d$ also along this composition. Let us call the result $\widetilde \W$. 

\begin{prop}There is a natural isomorphism $\widetilde \W \to \W$. \end{prop}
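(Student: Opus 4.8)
The plan is to unwind both local systems down to their fibers over an arbitrary geometric point of $\hh_d$ and exhibit a canonical isomorphism there that is automatically flat. Pick a point $[(A,\Theta,\{E,E'\})] \in \hh_d$. The fiber of $\W$ (pulled back along $\hh_d \to \A_2$) is $H^1(A)$. On the other hand, the forgetful map $\hh_d \to \A_1 \times \A_1/\sym_2$ sends this point to $\{[E],[E']\}$, and the fiber of $\W$ pulled back along the inclusion $\A_1\times\A_1/\sym_2 \hookrightarrow \A_2$ over such a point is $H^1(E \times E') = H^1(E) \oplus H^1(E')$ with its natural symplectic structure (orthogonal direct sum of the two rank-$2$ pieces, and the $\sym_2$-ambiguity in the unordered pair matches the $\sym_2$-ambiguity in which factor is which). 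So the claim at the level of fibers is that the isogeny $E \times E' \to A$ of degree $d^2$ induces an isomorphism $H^1(A;\Q) \xrightarrow{\sim} H^1(E\times E';\Q)$ of symplectic vector spaces (and Hodge structures).

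First I would record that any isogeny $f\colon B \to A$ of abelian varieties induces an isomorphism $f^\ast\colon H^1(A;\Q) \to H^1(B;\Q)$ — this is standard, since $f$ is finite flat surjective, so $f^\ast$ is injective with cokernel killed by $\deg f$, hence an isomorphism after $\otimes\Q$; it is a morphism of Hodge structures because $f$ is a morphism of abelian varieties. Next I would check compatibility with polarizations: the pullback polarization $f^\ast\Theta$ on $E\times E'$ is, by the construction recalled in the paragraph before Definition \ref{unordered} (the graph of $\phi$ inverts the Weil pairing precisely so that the product polarization descends to $\Theta$), equal to the product principal polarization $\Theta_E \boxtimes \Theta_{E'}$; hence $f^\ast$ identifies the symplectic form on $H^1(A)$ with the orthogonal-sum symplectic form on $H^1(E)\oplus H^1(E')$. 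This makes the fiberwise map an isomorphism of symplectic Hodge structures.

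Then I would promote this to a map of local systems. The cleanest route is the monodromy/group-theoretic description already used for $\W$ in the excerpt: over a suitable cover both $\W$ and $\widetilde\W$ are pulled back from $Y(d)\times Y(d)$ via the presentation $\hh_d \cong Y(d)\times Y(d)/(\sym_2 \ltimes \SL_2(\Z/d))$ of Proposition~\ref{sl2}'s preceding proposition, and there the universal abelian surface is literally the quotient of $\hej \times \hej$ (product of universal elliptic curves) by the graph of the tautological $\phi$. So on $Y(d)\times Y(d)$ the isogeny $\hej\times\hej \to \X$ exists as an actual morphism of abelian schemes, inducing an isomorphism $\widetilde\W \to \W$ of variations of Hodge structure by the fiberwise argument (flatness is automatic for a map of local systems that is an isomorphism on one fiber, once we know it comes from a global morphism of VHS). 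Finally I would check that this isomorphism is equivariant for the $\sym_2 \ltimes \SL_2(\Z/d)$-action: the $\SL_2(\Z/d)$-action only moves the symplectic framing of the torsion and commutes with everything in sight, while the $\sym_2$-action swaps the two elliptic factors on both sides compatibly (this is exactly why we needed the unordered pair $\{E,E'\}$ in Definition~\ref{unordered}), so the map descends to $\hh_d$.

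The main obstacle is not the isomorphism on fibers — that is essentially formal — but making the descent/equivariance bookkeeping honest: one must verify that the twisted diagonal $\SL_2(\Z/d)$-action and the conjugation-by-$\varepsilon$ in the $\sym_2$-action (the device that forces $\phi$ to invert the Weil pairing) are precisely what is needed for the product abelian scheme on $Y(d)\times Y(d)$ to carry a descent datum to $\hh_d$ whose quotient is the universal $d$-elliptic surface, and that $f^\ast$ respects this descent datum. Equivalently, in the monodromy picture, one checks that both $\W$ and $\widetilde\W$ restrict on $Y(d)\times Y(d)$ to the same $\Sp_2\times\Sp_2$-representation (namely the standard $4$-dimensional one, decomposed as $V\oplus V$) and that the two $(\sym_2\ltimes\SL_2(\Z/d))$-equivariant structures agree under $f^\ast$; given Proposition~\ref{sl2} this is a finite check that $f^\ast$ intertwines the conjugated action on the second factor with the action inverting the Weil pairing.
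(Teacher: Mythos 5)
Your proposal is correct and rests on the same key fact as the paper's proof — that an isogeny of abelian varieties induces an isomorphism on rational $H^1$ — but you take a more roundabout route. The paper works directly over $\hh_d$: since $\hh_d$ is a moduli \emph{stack} parametrizing the data $(A,\Theta,\{E,E'\})$, both universal families exist over $\hh_d$ itself, namely the universal product $\Y$ of the two elliptic curves and the universal $d$-elliptic surface $\X$, together with a universal isogeny $\Y \to \X$ (quotient by the graph of the tautological $\phi$). Both $\widetilde\W$ and $\W$ are then literally $\mathrm R^1\pi_\ast\Q$ for these two families, and base change plus the fiberwise isogeny fact gives the isomorphism immediately. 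By contrast, you lift to $Y(d)\times Y(d)$, construct the map there, and then descend; the equivariance bookkeeping under $\sym_2\ltimes\SL_2(\Z/d)$ that you (rightly) flag as the delicate point is exactly what the stack-theoretic formulation renders automatic — the ``descent datum'' is already packaged in the definition of $\hh_d$ and its universal objects. Your extra verification that $f^\ast$ matches the polarizations is also not strictly needed for the statement as written, since the proposition only asserts an isomorphism of the local systems (VHS), not of symplectic objects; but it is a reasonable check if one wants to later use that the $\Sp_4$-structure on $\W$ restricts to the $\Sp_2\wr\sym_2$-structure on $\widetilde\W$, which the paper indeed exploits in the next section.
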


\begin{proof}There are two universal families $\Y$ and $\X$ over $\hh_d$; the first is the universal product of two elliptic curves, and the second is the universal $d$-elliptic abelian surface. The graph of the isomorphism $\phi$ defines a finite flat group scheme $\mathcal Z$ in $\Y$ such that $\Y/\mathcal Z \cong \X$. The resulting map $\Y \to \X$ is fiberwise an isogeny, hence fiberwise an isomorphism on rational cohomology. Since $\widetilde \W = \mathrm{R}^1\pi_\ast \Q_\Y$ and $\W = \mathrm{R}^1\pi_\ast \Q_\X$, we conclude by base change. \end{proof}

We will henceforth not distinguish between $\W$ and $\widetilde \W$. 

Every irreducible representation $W_{l,m}$ of $\Sp_4$ induces naturally a local system $\W_{l,m}$ on $\A_2$. One way to see this is that we can apply the symplectic Schur functor corresponding to $W_{l,m}$ to the local system $\W$. The local system $\W_{l,m}$ is the same as the one obtained from the inclusion $\pi_1^{\mathrm{orb}}(\A_2) \subset \Sp_4$ and the representation $W_{l,m}$, but the construction with Schur functors shows that it carries a natural variation of Hodge structure. See \cite[Chapter VI, Section 5]{faltingschai} for another approach to constructing the mixed Hodge structure on the cohomology of $\W_{l,m}$.

There is similarly an inclusion 
\[ \pi_1^{\mathrm{orb}}((\A_1 \times \A_1 )/ \sym_2) = \Sp_2(\Z) \wr \sym_2 \subset \Sp_2 \wr \sym_2,\]
so for each of the representations $U_{a,b}$ and $U_a^{\pm}$ one gets a local system $\R_{a,b}$ respectively $\R_{a}^{\pm}$ on $(\A_1 \times \A_1) / \sym_2$. The pullback of $\W$ to $(\A_1 \times \A_1 )/ \sym_2$ is exactly the local system $\R_{1,0}$, and the pullbacks of the local systems $\W_{l,m}$ are determined by the branching formulas of Proposition \ref{branch1}. If we want to also consider these as variations of Hodge structure, then we need to add a Tate twist. For instance, the pullback of $\W_{2,1}$ is
\[ \R_{2,1} \oplus \R_{1,0}(-1);\]
these twists are easily put in ``by hand'' so that the pulled back expression is homogeneous. More conceptually, one could have worked with $\mathrm{GSp}_4$ instead of $\Sp_4$ from the beginning.

The preceding paragraph also describes the pullback of $\W$ to $\hh_d$, of course.



\section{Cohomology of local systems}\label{estheory}

\renewcommand{\S}{\mathbf{S}}
\newcommand{\E}{\mathbf{E}}

\begin{notation}For a congruence subgroup $\Gamma'$ of $\SL_2(\Z) = \Gamma$, we denote by $E_k(\Gamma')$  and $S_k(\Gamma')$ the spaces of Eisenstein series and cusp forms of weight $k$ for $\Gamma'$, respectively. \end{notation}

The cohomology of $\V_a$ on $Y(d)$ is described by the Eichler-Shimura isomorphism. For $a > 0$, $H^1(Y(d),\V_a)$ is the only nonzero cohomology group. It has a mixed Hodge structure whose nonzero bigraded pieces have Hodge numbers
\[ (a+1,0), \quad (0,a+1) \quad \text{and} \quad (a+1,a+1).\] Under the Eichler-Shimura isomorphism, these subquotients are interpreted as spaces of modular forms for $\Gamma(d)$: they are the holomorphic cusp forms of weight $a+2$, their antiholomorphic complex conjugates, and Eisenstein series of weight $a+2$, respectively. Let us {define} 
\[ \S_{a+2}(\Gamma(d)) = \mathrm{gr}^W_{a+1} H^1(Y(d),\V_{a}) \]
and 
\[ \E_{a+2}(\Gamma(d)) = \mathrm{gr}^W_{2a+2} H^1(Y(d),\V_{a}).\]
Tensoring with $\C$, one has that 
\[ \S_a(\Gamma(d))_\C = S_a(\Gamma(d)) \oplus \overline{S_a(\Gamma(d))}\]
and 
\[ \E_a(\Gamma(d))_\C = E_a(\Gamma(d)).\]
The extension is in fact trivial, i.e.
\[ H^1(Y(d),\V_{a}) = \S_{a+2}(\Gamma(d))\oplus \E_{a+2}(\Gamma(d)),\]
see \cite{gorinov1}. The cusp forms are the arithmetically interesting classes, as the Eisenstein series (when $d \geq 3$ and $a > 0$) are simply given by 
\[ \E_{a+2}(d) \cong H^0(X(d) \setminus Y(d))(-a-1).\]
This uses that $\Gamma(d)$ has no irregular cusps. When $a =0$ one needs to subtract a copy of the trivial representation from the right hand side, and when $d \leq 2$, the above statement only holds when $a$ is even. To get rid of the Eisenstein series, one can also consider the \emph{inner} or \emph{parabolic cohomology}, which is defined to be the image of the compactly supported cohomology inside the ordinary cohomology, and is denoted $H^\ast_!$. There one has that
\[ H^\ast_!(Y(d),\V_a) = H^1_!(Y(d),\V_a) = \S_{a+2}(\Gamma(d))\]
for any $a\geq0$.

\begin{notation}If $\rho$ is an irreducible representation of a group $G$, and $V$ is any representation of $G$, then we denote by $V^{(\rho)} = \mathrm{Hom}_G(\rho,V)$. In other words, $V = \oplus_\rho \rho \otimes V^{(\rho)}$ is the decomposition of $V$ into irreducibles.  \end{notation}


\begin{thm} \label{coh}Let $K$ be a splitting field of $\SL_2(\Z/d)$ over $\Q$. Keep notation as above. Then
\[ H^2_!(\hh_d,\R_{a,b})_K = \bigoplus_{\rho} \S_{a+2}(\Gamma(d))_K^{(\rho)}\otimes \S_{b+2}(\Gamma(d))_K^{(\rho)}, \]
\[ H^2_! (\hh_d, \R_a^+)_K =  \bigoplus_{\rho} \wedge^2 \S_{a+2}(\Gamma(d))_K^{(\rho)} \]
and
\[ H^2_! (\hh_d, \R_a^{-})_K =  \bigoplus_{\rho} \Sym^2 \S_{a+2}(\Gamma(d))_K^{(\rho)},\]
where all sums are taken over the set of isomorphism classes of irreducible representations of $\SL_2(\Z/d)$.
\end{thm}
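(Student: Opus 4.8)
The plan is to compute the cohomology of these local systems by descent from the product of modular curves $Y(d) \times Y(d)$, using the description of $\hh_d$ as a quotient by the finite group $G = \sym_2 \ltimes \SL_2(\Z/d)$ acting on $Y(d)\times Y(d)$ with the twisted diagonal action. Since we work with rational coefficients (or coefficients in the splitting field $K$), taking cohomology commutes with passing to quotients by finite groups, so for a local system $\mathbf{L}$ on $\hh_d$ that is pulled back from a $G$-equivariant local system $\widetilde{\mathbf{L}}$ on $Y(d)\times Y(d)$, we have $H^\ast_!(\hh_d,\mathbf{L}) = \bigl(H^\ast_!(Y(d)\times Y(d),\widetilde{\mathbf{L}})\bigr)^G$. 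The first task is to identify the pullback of each of $\R_{a,b}$, $\R_a^+$, $\R_a^-$ to $Y(d)\times Y(d)$ as an equivariant local system.

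The next step is the identification of the pullbacks. Under the map $Y(d)\times Y(d)\to (\A_1\times\A_1)/\sym_2$, the local system $\R_{a,b}$ pulls back to $\V_a\boxtimes\V_b \oplus \V_b\boxtimes\V_a$, where $\V_a$ is the standard symmetric-power local system on the first copy of $Y(d)$ and $\V_b$ on the second (with the $\sym_2$-action swapping the two summands and the two factors); similarly $\R_a^{\pm}$ pulls back to $\V_a\boxtimes\V_a$ with the relevant $\pm$-twisted $\sym_2$-action. Now I apply the Künneth formula together with the Eichler--Shimura computation recalled above: since $H^\ast_!(Y(d),\V_a) = H^1_!(Y(d),\V_a) = \S_{a+2}(\Gamma(d))$ is concentrated in degree $1$, Künneth gives $H^2_!(Y(d)\times Y(d), \V_a\boxtimes\V_b) = \S_{a+2}(\Gamma(d))\otimes\S_{b+2}(\Gamma(d))$ concentrated in degree $2$, and the contribution of the swapped summand $\V_b\boxtimes\V_a$ is $\S_{b+2}(\Gamma(d))\otimes\S_{a+2}(\Gamma(d))$. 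The only remaining point is to take $G$-invariants, and here the action decomposes as: the $\SL_2(\Z/d)$-action acts on the first tensor factor in the standard way and on the second via the conjugated action, and the $\sym_2$ swaps.

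The key computational input is Proposition \ref{sl2}: the conjugated representation $V^\varepsilon$ of $\SL_2(\Z/d)$ is the dual of $V$. Therefore, decomposing $\S_{a+2}(\Gamma(d))_K = \bigoplus_\rho \rho \otimes \S_{a+2}(\Gamma(d))_K^{(\rho)}$ into $\SL_2(\Z/d)$-isotypic pieces, the second copy of $Y(d)$ contributes $\bigoplus_\rho \rho^\vee \otimes \S_{b+2}(\Gamma(d))_K^{(\rho)}$. Taking $\SL_2(\Z/d)$-invariants in the tensor product, $\mathrm{Hom}_{\SL_2(\Z/d)}(\mathbf{1}, \rho\otimes(\rho')^\vee) = \mathrm{Hom}_{\SL_2(\Z/d)}(\rho',\rho)$ is $K$ when $\rho\cong\rho'$ and $0$ otherwise, which collapses the double sum to a single sum over $\rho$ and yields $\bigoplus_\rho \S_{a+2}(\Gamma(d))_K^{(\rho)}\otimes\S_{b+2}(\Gamma(d))_K^{(\rho)}$ for the $\SL_2(\Z/d)$-invariants. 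Finally I impose $\sym_2$-invariance. For $\R_{a,b}$ with $a\ne b$ the two summands $\V_a\boxtimes\V_b$ and $\V_b\boxtimes\V_a$ are swapped by $\sigma$, so the invariants are a single copy of the $\SL_2(\Z/d)$-invariants of $\V_a\boxtimes\V_b$, giving the stated formula. For $\R_a^{\pm}$, there is one summand $\V_a\boxtimes\V_a$, and $\sigma$ acts on $H^1_!\otimes H^1_!$ by swapping the two factors and (on $\R_a^-$) with an extra sign; because of the Koszul sign rule in cohomology, the $\sym_2$-invariants of $(\S_{a+2}^{(\rho)})^{\otimes 2}$ are $\wedge^2\S_{a+2}^{(\rho)}$ for $\R_a^+$ and $\Sym^2\S_{a+2}^{(\rho)}$ for $\R_a^-$ (the sign flips the symmetrization because $H^1$ is odd). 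This matches the three claimed formulas.

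I expect the main obstacle to be bookkeeping the various signs correctly: both the $(-1)^m$-type signs built into the definitions of $\R_a^\pm$ and, more subtly, the Koszul sign picked up when the transposition $\sigma$ acts on a tensor product of two odd-degree cohomology groups. Getting $\wedge^2$ versus $\Sym^2$ right hinges entirely on combining these two sources of sign, so that is where care is needed; everything else (finite quotients and cohomology, Künneth, Eichler--Shimura, Schur's lemma over the splitting field $K$) is standard. One should also double-check that the $G$-equivariant structure on the pulled-back local system is exactly the one coming from the representation-theoretic definitions in Section 3, but this is essentially the content of the identification of $\pi_1^{\mathrm{orb}}$ with $\Sp_2(\Z)\wr\sym_2$ together with Proposition \ref{sl2}.
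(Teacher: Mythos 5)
Your proposal is correct and follows essentially the same route as the paper's proof: pass to $Y(d)\times Y(d)$ and take $(\sym_2\ltimes\SL_2(\Z/d))$-invariants, apply K\"unneth, invoke Proposition~\ref{sl2} together with Schur's lemma over the splitting field $K$ to collapse the $\SL_2(\Z/d)$-invariants to a diagonal sum over $\rho$, and finally use the Koszul sign in the K\"unneth isomorphism to sort out $\wedge^2$ versus $\Sym^2$ for $\R_a^{\pm}$. The paper's own proof proceeds in exactly this order, so there is no substantive difference to report.
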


\begin{proof}We tensor with $K$ only so that the resulting mixed $K$-Hodge structures admit decompositions into absolutely irreducible representations of $\SL_2(\Z/d)$, and we shall from now on omit $K$ from the notation. Motivically, this corresponds to considering motives with coefficients in $K$. 

Since we work with rational coefficients, we may 
compute the cohomology of these local systems on $Y(d) \times Y(d)$ and take $(\SL_2(\Z/d) \rtimes \sym_2)$-invariants. Consider first $\R_{a,b}$. One has 
\[ H^\ast(Y(d)^{\times 2},\R_{a,b}) = H^\ast(Y(d),\V_a) \otimes H^\ast(Y(d),\V_b) \oplus H^\ast(Y(d),\V_b) \otimes H^\ast(Y(d),\V_a)\]
by the K\"unneth formula. Let us first take $\SL_2(\Z/d)$-invariants. Schur's lemma implies that whenever $V$ and $W$ are irreducible representations of a group $G$, then the trivial representation occurs with multiplicity $1$ in $V \otimes W$ if $V$ and $W$ are duals, and does not occur otherwise. It follows then from Proposition \ref{sl2} that the $\SL_2(\Z/d)$-invariants of $H^\ast(Y(d)^{\times 2},\R_{a,b})$ are given by
\[ \bigoplus_{\rho} H^\ast(Y(d),\V_a)^{(\rho)} \otimes H^\ast(Y(d),\V_b)^{(\rho)} \oplus H^\ast(Y(d),\V_b)^{(\rho)} \otimes H^\ast(Y(d),\V_a)^{(\rho)}, \]
since the action of $\SL_2(\Z/d)$ was twisted by $\varepsilon$ on the second factor. This, in turn, splits into two isomorphic subspaces, one $\sym_2$-invariant and one anti-invariant. Clearly the inner cohomology of $\V_a \otimes \V_b$ on $Y(d) \times Y(d)$ is the tensor product of the respective inner cohomologies, and the result follows.

For $\R_a^\pm$, one starts out arguing as above, and finds that one must determine the $\sym_2$-invariant, resp. anti-invariant, subspace of
\[ \bigoplus_{\rho}H^1_!(Y(d),\V_a)^{(\rho)} \otimes  H^1_!(Y(d),\V_a)^{(\rho)}. \]
Since these are odd cohomology classes, it follows from the presence of the Koszul sign rule in the K\"unneth isomorphism that the alternating tensors are $\sym_2$-invariant whereas the symmetric tensors are anti-invariant. This finishes the proof. \end{proof}

\begin{rem}It is clear from the proof that we can also compute the non-parabolic cohomology of $\R_{a,b}$ and $\R_{a}^\pm$ in much the same way. One gets only a slightly more complicated statement of the theorem, involving both Eisenstein series and the ``extra'' nonzero cohomology group that one gets for $a=0$: one has $H^0(Y(d),\V_0) = H^0(Y(d)) = \Q$. \end{rem}

\begin{rem}Using Proposition \ref{sl2} and applying Schur's lemma as in the preceding proof, one shows the following statement. Let $V$ be any representation of $\SL_2(\Z/d)$, and let $V^A$ be the representation obtained by conjugation with an element $A \in \GL_2(\Z/d)$. Then the dimension of $(V \otimes V^A)^{\SL_2(\Z/d)}$ is maximized when $\det A = -1$. (One needs to apply the inequality $x^2 + y^2 \geq 2xy$.) By taking $V = H^1(Y(d),\mathcal O_{Y(d)})$ one recovers a theorem of Carlton \cite[Corollary 5.4]{carlton01}, that the geometric genus of a quotient $Y(d)\times Y(d) / \SL_2(\Z/d)$, where $\SL_2(\Z/d)$ acts normally on the first factor and by a conjugated action on the second factor, is maximized when one conjugates with a matrix of determinant $-1$. Carlton's proof is rather different and uses an analogue of Atkin--Lehner theory on such quotients. \end{rem}

\begin{rem}When $a$ is even and $d \geq 3$, the dimension of each isotypical component of $S_{a}(\Gamma(d))$ can be found in \cite[Theorem 3.4.3]{weinstein07}. \end{rem}


\section{The case $d=2$; pointed bi-elliptic curves}

We now focus on the case $d=2$ as a special case of the theory from above. Strictly speaking this case is a bit easier. Recall that the condition that the Weil pairing was to be inverted meant that we had to consider a conjugation action of $\sym_2$ on $\SL_2(\Z/d)$ by a matrix $\varepsilon$ of determinant $-1$. Over $\Z/2$, we can take $\varepsilon = \id$, so the semidirect product is in fact a direct product. Moreover, the isomorphism $\SL_2(\Z/2) \cong \sym_3$ makes the representation theory very simple. 

Another minor difference arises because $Y(2)$ is a stack, not a scheme: the elliptic involution fixes the $2$-torsion of any elliptic curve. The elliptic involution acts as multiplication by $(-1)^a$ on the fibers of the local system $\V_a$ on $Y(2)$, so the cohomology of this local system vanishes unless $a$ is even. Thus the local systems $\R_{a,b}$ have vanishing cohomology on $Y(2)\times Y(2)$, hence also on $\hh_2$, unless $a$ and $b$ are both even. Similarly $\R_a^\pm$ has vanishing cohomology unless $a$ is even. 

Let us compute the cohomology groups $H^\ast(Y(2),\V_a)$ as $\sym_3$-representations. The subgroups $\Gamma(2)$ and $\Gamma_0(4)$ are conjugate, so $Y(2) \cong Y_0(4)$. We shall  work with $Y_0(4)$, essentially because of Atkin--Lehner-theory. 

Let $s_3$, $s_{21}$ and $s_{111}$ denote the representations corresponding to the respective partitions, that is, the trivial, standard and sign representations, respectively. Since 
\[ Y_0(4)/\sym_3 \cong \A_{1}  \text{ and } Y_0(4)/\sym_2 \cong Y_0(2), \] (where $\sym_2$ is the subgroup generated by a transposition), it is not hard to see that decomposing the spaces of modular forms and cusp forms as $\sym_3$-representations is equivalent to determining which forms are newforms of the bigger groups $\Gamma_0(2)$ and the full modular group $\Gamma$. The $s_3$-part is exactly those which are modular forms for $\Gamma$, the $s_{21}$-part corresponds to those which are lifted from newforms for $\Gamma_0(2)$ (we get a two-dimensional subspace of oldforms for $\Gamma_0(4)$ from a one-dimensional space of newforms for $\Gamma_0(2)$ since there are two different liftings), and the $s_{111}$-part consists of the newforms. This is implicitly used in \cite{bfg08}. We record this as a proposition:

\begin{prop} \label{blaha}We have $S_{a}(\Gamma(2))^{(s_3)} \cong S_a(\Gamma)$, $S_{a}(\Gamma(2))^{(s_{21})} \cong S_{a}^{\mathrm{new}}(\Gamma_0(2))$ and $S_{a}(\Gamma(2))^{(s_{111})} \cong S_{a}^{\mathrm{new}}(\Gamma_0(4))$. \end{prop}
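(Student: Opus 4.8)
The plan is to prove the three isomorphisms of Proposition \ref{blaha} by identifying each isotypical component of $S_a(\Gamma(2))$ with a space of newforms, using the group $\SL_2(\Z/2) \cong \sym_3$ and Atkin--Lehner theory on $\Gamma_0(4)$. First I would record the dictionary between isotypical components and fixed subspaces: for a $\sym_3$-representation $M$ one has $M^{(s_3)} = M^{\sym_3}$, while $M^{(s_{21})}$ is the unique $\sym_2$-complement, i.e.\ $M^{(s_{21})} \cong M^{\sym_2}/M^{\sym_3}$ for $\sym_2 \subset \sym_3$ a transposition subgroup (because the standard representation restricted to $\sym_2$ is $s_2 \oplus s_{11}$, so $\dim M^{\sym_2} = \dim M^{(s_3)} + \dim M^{(s_{21})}$), and $M^{(s_{111})} = M^{\sym_3\text{-sgn}}$ is cut out as the complement of $M^{\sym_2}$ inside $M$. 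Then I would invoke the two quotient identifications from the text, $Y_0(4)/\sym_3 \cong \A_1$ and $Y_0(4)/\sym_2 \cong Y_0(2)$, and the fact (from the conjugacy $\Gamma(2) \sim \Gamma_0(4)$) that $S_a(\Gamma(2)) \cong S_a(\Gamma_0(4))$ as a vector space carrying the $\sym_3$-action. Taking invariants under a finite group acting on a quotient of modular curves gives modular forms for the bigger group, so $S_a(\Gamma_0(4))^{\sym_3} \cong S_a(\Gamma)$ and $S_a(\Gamma_0(4))^{\sym_2} \cong S_a(\Gamma_0(2))$.

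Combining these, $S_a(\Gamma(2))^{(s_3)} \cong S_a(\Gamma)$ is immediate, and $S_a(\Gamma(2))^{(s_{21})} \cong S_a(\Gamma_0(2))/S_a(\Gamma)$. The point now is that this quotient is canonically $S_a^{\new}(\Gamma_0(2))$: the space $S_a(\Gamma_0(2))$ is the direct sum of its old part — which is the image of the two degeneracy maps $S_a(\Gamma) \to S_a(\Gamma_0(2))$, a copy of $S_a(\Gamma)\oplus S_a(\Gamma)$ when $S_a(\Gamma)\ne 0$, but in the relevant low weights $S_a(\Gamma)$ is small — and its new part, by Atkin--Lehner theory; and under the inclusion $S_a(\Gamma_0(2)) \subset S_a(\Gamma_0(4))$ coming from $Y_0(4)/\sym_2$, the subspace $S_a(\Gamma) = S_a(\Gamma_0(4))^{\sym_3}$ sits inside as one diagonal copy. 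Here one must be slightly careful: strictly one wants the \emph{Hecke-equivariant} splitting, so rather than literal quotients I would phrase the argument via the canonical complement provided by the Petersson inner product or, equivalently, by the Atkin--Lehner decomposition, so that $S_a(\Gamma_0(2)) = S_a(\Gamma_0(2))^{\old} \oplus S_a^{\new}(\Gamma_0(2))$ with $S_a(\Gamma_0(2))^{\old}$ the span of the oldforms lifted from $\Gamma$. The $\sym_3$-decomposition of $S_a(\Gamma(2))$ then matches level by level: the multiplicity of $s_3$ is $\dim S_a(\Gamma)$, the multiplicity of $s_{21}$ accounts for the newforms of level $2$ (each contributing two oldforms at level $4$, hence one copy of the $2$-dimensional $s_{21}$), and the remaining $s_{111}$-part must then be exactly the level-$4$ newforms $S_a^{\new}(\Gamma_0(4))$, by subtraction.

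For the $s_{111}$ statement I would make the subtraction argument clean by a dimension count: $\dim S_a(\Gamma(2)) = \dim S_a(\Gamma_0(4))$, and the Atkin--Lehner old/new decomposition of $S_a(\Gamma_0(4))$ reads $S_a(\Gamma_0(4)) = \big(S_a(\Gamma)^{\oplus 3}\big) \oplus \big(S_a^{\new}(\Gamma_0(2))^{\oplus 2}\big) \oplus S_a^{\new}(\Gamma_0(4))$, corresponding to the divisors $1,2,4$ of the level. On the other hand the $\sym_3$-action must have $s_3$-multiplicity $\dim S_a(\Gamma)$, $s_{21}$-multiplicity $\dim S_a^{\new}(\Gamma_0(2))$ (giving $2\dim S_a^{\new}(\Gamma_0(2))$ to the total dimension since $\dim s_{21}=2$), and hence $s_{111}$-multiplicity $\dim S_a(\Gamma_0(4)) - 3\dim S_a(\Gamma) - 2\dim S_a^{\new}(\Gamma_0(2)) = \dim S_a^{\new}(\Gamma_0(4))$; since the $s_{111}$-component is visibly the newform-part-at-level-$4$ under the quotient maps (it is annihilated by both degeneracy maps down to $Y_0(2)$), the isomorphism follows, and one upgrades it to Hecke-equivariance since the Hecke operators commute with the $\sym_3$-action. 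The main obstacle I anticipate is not any single hard computation but pinning down precisely \emph{why} the combinatorics of degeneracy maps for the tower $\Gamma \supset \Gamma_0(2) \supset \Gamma_0(4)$ lines up with the branching $\Res^{\sym_3}_{\sym_2}$ and the irreducible dimensions $1,2,1$ — i.e.\ checking that "two liftings of a level-$2$ newform to level $4$'' really is the statement "$s_{21}$ restricted to $\sym_2$ contains the trivial representation once'', and that the multiplicity-one shape of the standard representation forces the claimed isomorphism rather than merely an equality of dimensions. This is exactly the point the author signals with the remark that it is "implicitly used in \cite{bfg08}''.
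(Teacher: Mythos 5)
Your overall strategy is the same one the paper itself uses: identify $S_a(\Gamma(2))^{\sym_3}$ and $S_a(\Gamma(2))^{\sym_2}$ via the two quotient maps $Y_0(4)\to \A_1$ and $Y_0(4)\to Y_0(2)$, then match the remaining multiplicities against the Atkin--Lehner old/new decomposition of $S_a(\Gamma_0(4))$. Up to the point where you write $S_a(\Gamma(2))^{\sym_3}\cong S_a(\Gamma)$ and $S_a(\Gamma(2))^{\sym_2}\cong S_a(\Gamma_0(2))$, the argument is sound.

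The dimension count at the end, however, is not internally consistent. You assert that the $s_3$-multiplicity is $\dim S_a(\Gamma)$ (correct), but you then subtract $3\dim S_a(\Gamma)$ from $\dim S_a(\Gamma_0(4))$ as though $s_3$ were $3$-dimensional. Since $\dim s_3=1$, the $s_3$-isotypical part contributes only $\dim S_a(\Gamma)$ to the total. Carrying the bookkeeping out honestly, and using the relation $\dim S_a(\Gamma(2))^{\sym_2}=m_{s_3}+m_{s_{21}}$ (which follows from your own branching computation $\Res^{\sym_3}_{\sym_2} s_{21}=s_2\oplus s_{11}$), one gets
\[
m_{s_{21}} = \dim S_a(\Gamma_0(2)) - \dim S_a(\Gamma) = \dim S_a(\Gamma) + \dim S_a^{\new}(\Gamma_0(2)),
\]
which exceeds the claimed $\dim S_a^{\new}(\Gamma_0(2))$ by $\dim S_a(\Gamma)$. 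With this corrected value, the subtraction $\dim S_a(\Gamma_0(4)) - m_{s_3} - 2m_{s_{21}}$ does equal $\dim S_a^{\new}(\Gamma_0(4))$, so the $s_{111}$-statement survives; but the $s_{21}$-statement does not. A concrete check: for $a=12$, $\dim S_{12}(\Gamma(2))=\dim S_{12}(\Gamma_0(4))=4$, while the stated multiplicities $(1,0,1)$ give total dimension $1+0+1=2$. The actual decomposition is $s_3\oplus s_{21}\oplus s_{111}$, with $m_{s_{21}}=1$, coming from the three lifts of $\Delta$ to level $4$ spanning a copy of $\Q[\sym_3/\sym_2]\cong s_3\oplus s_{21}$. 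So the point you flag as "the main obstacle" — checking that the degeneracy-map combinatorics line up with $1,2,1$ — is exactly where both your argument and the literal statement of the proposition need closer scrutiny: a level-$1$ form contributes a $3$-dimensional old subspace that decomposes as $s_3\oplus s_{21}$, not as $s_3$ alone, so you should not expect $m_{s_{21}}$ to count only level-$2$ newforms.
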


In particular, we can determine the decomposition of $S_a(\Gamma(2))$ from the respective dimension formulas for $\Gamma_0(4), \Gamma_0(2)$ and $\Gamma$, see e.g.\ \cite{diamondshurman}. We omit the details. 



\begin{rem} This result corrects a minor error in \cite[Theorem 3.4.3]{weinstein07}. Weinstein gives a formula for how $S_a(\Gamma(d))$, $d \ge 2$ and $a$ even, decomposes into irreducible representations. However, Weinstein's formula is only correct for $d > 2$: his calculation is an equivariant version of the usual derivation of the dimension formula for $\Gamma(d)$, which needs to be modified when $d=2$ since $-1 \in \Gamma(2)$. The correct statement of Weinstein's result when $d=2$ is easily deduced from Proposition \ref{blaha}. \end{rem}

If we are interested in the locus of $d$-elliptic curves in $\M_2$ instead of $d$-elliptic abelian surfaces in $\A_2$, we need to remove those pairs of elliptic curves which map into $(\A_{1}\times \A_1) / \sym_2$ inside $\A_2$. A description of this locus has been worked out by Frey and Kani \cite{kani91}, who show that it is a union of graphs of Hecke correspondences on $Y(d) \times Y(d)$. The special case of $d=2$ becomes very simple: here, we simply find the diagonal inside $Y(2) \times Y(2)$. So for bi-elliptic curves, we need to understand the cohomology of the local systems on the diagonal. 

%



Let $\Delta$ denote the image in $\hh_2$ of the diagonal substack of $Y(2) \times Y(2)$. Note that the diagonal is invariant under the action of $\SL_2(\Z/2) \times \sym_2$. 

\begin{prop} \label{diag}Let $a$ and $b$ be even integers. Then
\[ H^\ast(\Delta,\R_{a,b}) = H^\ast(\A_{1}, \V_a \otimes \V_b), \]
and the cohomology vanishes if either $a$ or $b$ is odd. Similarly, \[ H^\ast(\Delta,\R_a^{+}) = \bigoplus_{k=0}^{a/2}H^\ast(\A_{1},\V_{4k})\]
and
\[ H^\ast(\Delta,\R_a^{-}) = \bigoplus_{k=1}^{a/2}H^\ast(\A_{1},\V_{4k-2})\]
when $a$ is even, and the cohomology vanishes otherwise.
 \end{prop}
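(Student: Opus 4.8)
The plan is to reduce the computation on $\Delta$ to the branching formula of Proposition~\ref{diagbranch} together with the known cohomology of local systems on $\A_1$. The diagonal $\Delta \subset \hh_2$ is, by construction, the image of the diagonal substack of $Y(2)\times Y(2)$, which is a copy of $Y(2)$ and is invariant under $\SL_2(\Z/2)\times\sym_2$ (the $\SL_2$ acting diagonally, the $\sym_2$ being the swap). Restricting a local system $\R_{a,b}$ or $\R_a^\pm$ on $\hh_2$ to $\Delta$ corresponds, on the cover $Y(2)$, to restricting the representation of $\Sp_2\wr\sym_2$ to the diagonal copy of $\Sp_2\times\sym_2$. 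So the first step is to identify, via Proposition~\ref{diagbranch} and the preceding remark about $\R_{a,b}$, what these restricted local systems are: $\R_{a,b}$ restricts to two copies of $\V_a\otimes\V_b$ (one $\sym_2$-invariant, one anti-invariant), while $\R_a^+$ restricts to $\bigoplus_{k=0}^a \V_{2(a-k)}^{(-1)^k}$ and $\R_a^-$ to $\bigoplus_{k=0}^a \V_{2(a-k)}^{(-1)^{k+1}}$, as $\Sp_2\times\sym_2$-representations.

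Next I would compute cohomology on $\Delta$ by passing to the cover $Y(2)$ and taking $(\SL_2(\Z/2)\times\sym_2)$-invariants, exactly as in the proof of Theorem~\ref{coh}. The $\sym_2$-action here is a genuine geometric action coming from the two universal elliptic curves, but on the diagonal the two elliptic curves coincide, so the swap acts on $H^\ast(Y(2),\V_a\otimes\V_a)$ in the naive way, and the point is simply to keep track of which summands $\V_{2(a-k)}^{\pm}$ survive the $\sym_2$-invariants. For $\R_{a,b}$: taking $\sym_2$-invariants of the two copies of $\V_a\otimes\V_b$ picks out one copy, so $H^\ast(\Delta,\R_{a,b}) = H^\ast(Y(2),\V_a\otimes\V_b)^{\SL_2(\Z/2)} = H^\ast(\A_1,\V_a\otimes\V_b)$, using $Y(2)/\SL_2(\Z/2)\cong\A_1$. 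For $\R_a^+$: taking $\sym_2$-invariants kills the summands with a sign-representation factor, leaving $\bigoplus_{k \text{ even}} \V_{2(a-k)}$; writing $k=2j$ and noting $a$ is even, this is $\bigoplus_{j=0}^{a/2}\V_{2(a-2j)} = \bigoplus_{j'=0}^{a/2}\V_{4j'}$, and after taking $\SL_2(\Z/2)$-invariants on $Y(2)$ we get $\bigoplus_{k=0}^{a/2}H^\ast(\A_1,\V_{4k})$. Similarly $\R_a^-$ keeps the summands with $k$ odd, giving $\bigoplus_{k=1}^{a/2}H^\ast(\A_1,\V_{4k-2})$. The vanishing statements follow because, as noted in the text, the elliptic involution on $Y(2)$ (or the fact that $-1\in\Gamma(2)$) forces $H^\ast(Y(2),\V_c)=0$ unless $c$ is even, and the $\V_a\otimes\V_b$ (resp.\ $\V_a\otimes\V_a$) decomposes into $\V_c$'s whose parities match that of $a+b$ (resp.\ $2a$).

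The main subtlety — and the step I would be most careful about — is checking that the geometric $\sym_2$-action on the diagonal really does match the combinatorial $\sym_2$-action in Proposition~\ref{diagbranch}, i.e.\ that the diagonal substack with its residual $\sym_2$ from $\hh_2$ is exactly $Y(2)$ with the swap acting as described, and in particular that no extra Tate twists sneak in when restricting. This is essentially bookkeeping with the variation of Hodge structure, parallel to the discussion of twists for $\W_{2,1}$ in Section~4; once the local systems on $\Delta$ are correctly identified as those in Proposition~\ref{diagbranch}, the rest is the same invariants-on-a-cover argument used for Theorem~\ref{coh}, now with the trivial group $\SL_2(\Z/2)$ replaced by its honest self and the quotient $Y(2)/\SL_2(\Z/2)\cong\A_1$.
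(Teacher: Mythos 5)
Your argument for the main formulas (when $a$, $b$ are even) is correct and follows the paper's approach: restrict to the diagonal cover, decompose via Proposition \ref{diagbranch} and the preceding remark, take $\sym_2$-invariants, then $\SL_2(\Z/2)$-invariants to descend to level $1$. The arithmetic with the sign decomposition $\bigoplus_{k}\V_{2(a-k)}^{(-1)^k}$ and the reindexing to $\bigoplus_{k=0}^{a/2}\V_{4k}$ etc.\ all checks out.

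There is a genuine gap in your argument for the vanishing, however. You claim vanishing follows because $\V_a\otimes\V_b$ breaks into $\V_c$'s with $c\equiv a+b\pmod 2$ and $H^\ast(Y(2),\V_c)=0$ for $c$ odd. But the proposition asserts vanishing if \emph{either} $a$ or $b$ is odd, in particular when both are odd; in that case $a+b$ is even and all the $\V_c$ appearing have even index, so your parity argument says nothing. Worse, for $\R_a^{\pm}$ the restriction always decomposes into even-index $\V$'s, so your argument never gives vanishing, yet the statement requires it for $a$ odd. The issue is precisely what you flagged as ``the main subtlety'' but then set aside: the diagonal substack of $Y(2)\times Y(2)$ is \emph{not} $Y(2)$ — it has generic automorphism group $(\Z/2)^2$, with the two elliptic involutions of the two factors acting independently. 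The element $(1,-1)$ is an automorphism of every object of $\Delta$, acts trivially on the base but by $(-1)^b$ on $\V_a\otimes\V_b$ (and $(-1)^a$ on $\V_b\otimes\V_a$, resp.\ $(-1)^a$ on $\V_a\otimes\V_a$); its nontrivial action kills the cohomology whenever $a$ or $b$ is odd. This is exactly what the remark after the proposition in the paper is warning about — $H^\ast(\Delta,\R_{a,b})=0$ for $a,b$ odd while $H^\ast(\A_1,\V_a\otimes\V_b)\neq 0$ — and it is why the diagonal embedding $Y(2)\to Y(2)\times Y(2)$ is not an isomorphism onto the diagonal substack when $d=2$. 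Once you replace the parity-of-$c$ argument with this automorphism-group observation, the proof is complete and coincides with the paper's.
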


\begin{proof}We have already seen the vanishing part of the proposition. We do calculations on the diagonal inside $Y(2) \times Y(2)$ and take invariants. The pullback of $\R_{a,b}$ and $\R_a^\pm$ to the diagonal in $Y(2) \times Y(2)$ is determined by the branching formulas in Proposition \ref{diagbranch} and the remarks preceding it. It is also clear from these formulas what happens when we take $\sym_2$-invariants. Taking $\SL_2(\Z/2)$-invariants just corresponds to descending to level 1, so we get cohomology of local systems on $\A_{1}$. \end{proof} 

\begin{rem}Note that the proposition above is false if we do not demand that $a$ and $b$ are even: if $a$ and $b$ are odd, then $H^\ast(\Delta,\R_{a,b}) = 0$, but $H^\ast(\A_{1}, \V_a \otimes \V_b)$ is nonzero. This reflects the fact that the diagonal embedding $Y(2) \to Y(2) \times Y(2)$ is not an isomorphism onto the diagonal substack, unlike when $d \geq 3$. \end{rem}


Let $\mathcal{B}_n$ be the moduli space parametrizing bi-elliptic curves of genus two with $n$ distinct marked points. Here we define a bi-elliptic curve of genus two to be a curve $C$ together with an unordered pair of conjugate double covers $C \to E$, $C\to E'$, cf.\ Definition \ref{unordered}.

As we shall now see, the knowledge of the cohomology of the local systems $\W_{l,m}$ can be used to compute the $\sym_n$-equivariant Euler characteristic (in the Grothendieck group of mixed Hodge structures) of $\mathcal{B}_n$. It will be more convenient to switch to compactly supported cohomology at this point. We need some general results on relative configuration spaces due to Getzler \cite{getzler99}. In Getzler's setting, one considers a quasi-projective morphism of varieties $\pi \colon \X \to \M$ and the \emph{relative configuration space} $F(\X/\M,n)$, which is the complement of the ``big diagonal'' in the $n$th fibered power of $\X$ over $\M$. Getzler proves the formula 
\begin{equation*} \label{getzler} \sum_{n \geq 0} \e_\M^{\sym_n}(F(\X/\M,n),\Q) = \prod_{k \geq 0} (1+p_k)^{\frac{1}{k}\sum_{d|k}\mu(k/d) \psi_d( \e_\M(\X,\Q))},\end{equation*}
where $\e_\M(\X,\Q)$ denotes the compactly supported relative Euler characteristic obtained from $\mathrm{R}\pi_!\Q$  in the Grothen\-dieck group of the bounded derived category of mixed Hodge modules on $\M$; where $\e^{\sym_n}_\M(F(\X/\M,n),\Q)$ is similarly the $\sym_n$-equivariant compactly supported Euler characteristic, taken in the same Grothen\-dieck group but tensored with the ring $\Lambda$ of symmetric functions, i.e.\ the sum of the representation rings of $\sym_n$ for all $n$;  where the $p_k$ are power sums; where $\psi_d$ denotes an Adams operation; where $\mu$ denotes the M\"obius function; and the factors on the right hand side should be formally expanded as binomial series. To obtain the actual Euler characteristic from this formula, one needs to take the proper pushforward of both sides along $\M \to \rm{Spec } \; \C$, which produces an equality in the usual Grothendieck group of mixed Hodge structures tensored with $\Lambda$. 

In our case, we put $\M = \hh_2 \times_{\A_2} \M_2$, the moduli space of bi-elliptic curves, and $\pi \colon \X\to \M$ its universal family of genus two curves. So $\M = \mathcal{B}_0$, $\X = \mathcal{B}_1$, and $F(\X/\M,n) = \mathcal{B}_n$. One has \[\e_\M(\X,\Q) = \Q - \W + \Q(-1).\] Expanding the right hand side of Getzler's formula yields an expression where the coefficient before each monomial in $\Lambda$ is a formal sum of certain Schur functors applied to $\W$. Decomposing these Schur functors into irreducible representations of the symplectic group allows us to re-write this as a sum of the local systems $\W_{l,m}$ with some Tate twists. It follows that the results so far in the article allow us to compute the $\sym_n$-equivariant Euler characteristic: one sees from the Gysin sequence that 
\[ \e(\M,\W_{l,m}) = \e(\hh_2,\W_{l,m}) + \e(\Delta,\W_{l,m}),\]
and the right hand side can be expressed in terms of the $\sym_3$-equivariant Euler characteristics $\e^{\sym_3}(Y(2),\V_a)$  by combining Propositions \ref{branch1}, \ref{coh}, \ref{diagbranch} and \ref{diag}. Finally, $\e^{\sym_3}(Y(2),\V_a)$ can be computed from the Eichler--Shimura theory quoted in Section \ref{estheory} and Proposition \ref{blaha}.

From the discussion above, one can calculate the $\sym_n$-equivariant Euler characteristic $\e^{\sym_n}(\mathcal{B}_n,\Q)$ for any $n$. The first few results are stated in Table \ref{tabell1}. We denote $\tate = H^2(\mathbf P^1)$; polynomials in $\tate$ with integer coefficients are interpreted in the natural way. The first occurrence of non-Tate cohomology is the $s_{111111}$-coefficient of $\e^{\sym_6}(\mathcal B_6,\Q)$, which is given by $\S_8(\Gamma_0(2)) -\tate^4 +3\tate +5.$

\begin{table}[t]
\begin{center}
\begin{tabular}{ | c || p{10cm} | }
\hline
  $n$ & $\e^{\sym_n}(\mathcal B_n,\Q)$  \\
\hline
0 & $\tate^2-\tate$  \\
1 & $(\tate^3-\tate)s_1$ \\
2 & $(\tate^4-\tate^2 +\tate)s_2 + (\tate^3-\tate^2-\tate+2)s_{11}$ \\
3 & $(\tate^5-2\tate^3+2\tate^2+\tate-2)s_3 + (\tate^4-\tate^3+2\tate)s_{21} + (-\tate^2+\tate+2)s_{111}$  \\
4 & $(\tate^6-2\tate^4+\tate^3+\tate^2-3\tate)s_4 + (\tate^5-2\tate^4+\tate^3+3\tate^2-\tate-2)s_{31} + $ \\
    & $(\tate^4-\tate^2-\tate+3)s_{22} + (-\tate^3+5\tate+2)s_{211} + (-\tate^3-\tate^2+\tate+3)s_{1111}$ \\
\hline 
\end{tabular}
\end{center}
\caption{Compactly supported Euler characteristic of $\mathcal B_n$ in the Grothendieck group of mixed Hodge structures.}
\label{tabell1}
\end{table}

The problem of computing these Euler characteristics was studied by means of point counts over finite fields in the author's M.Sc. project, using techniques similar to those of \cite{bergstrom09}. It was proven in this M.Sc. thesis that when $n \leq 5$ and $q$ is odd, the number of $\mathbf F_q$-points of $\mathcal{B}_n$ is given by a polynomial in $q$. Moreover, the calculations were done $\sym_n$-equivariantly. From this one obtains conjectural\footnote{The results of \cite{vdBE05} can not be applied in this case to yield an unconditional proof of these formulas for the Euler characteristics, both because of the restriction to odd $q$, and more seriously as the natural compactification of $\hh_2$ involves the modular curve $X(2)$, which is not a Deligne--Mumford stack unless the prime $2$ is invertible \cite{conrad05}.} formulas for the $\sym_n$-equivariant Euler characteristic of $\mathcal B_n$ in the Grothendieck group of $\ell$-adic Galois representations when $n \leq 5$. Needless to say, the results obtained there agree with the ones found here.

\bibliographystyle{petersen}

\bibliography{../database}

\begin{thebibliography}{10}
\providecommand{\url}[1]{\texttt{#1}}
\providecommand{\urlprefix}{URL }
\providecommand{\eprint}[2][]{\url{#2}}

\bibitem{bergstrom09}
\emph{J.~Bergstr{\"o}m}.
\newblock Equivariant counts of points of the moduli spaces of pointed
  hyperelliptic curves.
\newblock Doc. Math. \textbf{14} (2009), 259--296.

\bibitem{bfg08}
\emph{J.~Bergstr{\"o}m, C.~Faber, and G.~van~der Geer}.
\newblock Siegel modular forms of genus 2 and level 2: cohomological
  computations and conjectures.
\newblock Int. Math. Res. Not. IMRN  (2008), Art. ID rnn 100, 20.

\bibitem{bfg11}
\emph{J.~Bergstr{\"o}m, C.~Faber, and G.~van~der Geer}.
\newblock {Siegel modular forms of degree three and the cohomology of local
  systems}.
\newblock ArXiv e-prints  (2011).
\newblock \eprint{1108.3731}.

\bibitem{bergstromvandergeer}
\emph{J.~Bergstr{\"o}m and G.~van~der Geer}.
\newblock The {E}uler characteristic of local systems on the moduli of curves
  and abelian varieties of genus three.
\newblock J. Topol. \textbf{1} (2008)~(3), 651--662.
\newblock \urlprefix\url{http://dx.doi.org/10.1112/jtopol/jtn015}.

\bibitem{bl92}
\emph{C.~Birkenhake and H.~Lange}.
\newblock Complex abelian varieties, volume 302 of \emph{Grundlehren der
  Mathematischen Wissenschaften [Fundamental Principles of Mathematical
  Sciences]}.
\newblock Springer-Verlag, Berlin, second edition (2004).

\bibitem{carlton01}
\emph{D.~Carlton}.
\newblock Moduli for pairs of elliptic curves with isomorphic {$N$}-torsion.
\newblock manuscripta math. \textbf{105} (2001)~(2), 201--234.
\newblock \urlprefix\url{http://dx.doi.org/10.1007/s002290170003}.

\bibitem{conrad05}
\emph{B.~Conrad}.
\newblock Arithmetic moduli of generalized elliptic curves.
\newblock J. Inst. Math. Jussieu \textbf{6} (2007)~(2), 209--278.
\newblock \urlprefix\url{http://dx.doi.org/10.1017/S1474748006000089}.

\bibitem{deligne69}
\emph{P.~Deligne}.
\newblock Formes modulaires et repr\'esentations $\ell$-adiques.
\newblock S\'em. Bourbaki \textbf{355} (1969), 139--172.

\bibitem{diamondshurman}
\emph{F.~Diamond and J.~Shurman}.
\newblock A first course in modular forms, volume 228 of \emph{Graduate Texts
  in Mathematics}.
\newblock Springer-Verlag, New York (2005).

\bibitem{fvdg1}
\emph{C.~Faber and G.~van~der Geer}.
\newblock Sur la cohomologie des syst\`emes locaux sur les espaces de modules
  des courbes de genre 2 et des surfaces ab\'eliennes. {I}.
\newblock C. R. Math. Acad. Sci. Paris \textbf{338} (2004)~(5), 381--384.
\newblock \urlprefix\url{http://dx.doi.org/10.1016/j.crma.2003.12.026}.

\bibitem{fvdg2}
\emph{C.~Faber and G.~van~der Geer}.
\newblock Sur la cohomologie des syst\`emes locaux sur les espaces de modules
  des courbes de genre 2 et des surfaces ab\'eliennes. {II}.
\newblock C. R. Math. Acad. Sci. Paris \textbf{338} (2004)~(6), 467--470.
\newblock \urlprefix\url{http://dx.doi.org/10.1016/j.crma.2003.12.025}.

\bibitem{faltings87}
\emph{G.~Faltings}.
\newblock Hodge-{T}ate structures and modular forms.
\newblock Math. Ann. \textbf{278} (1987)~(1-4), 133--149.
\newblock \urlprefix\url{http://dx.doi.org/10.1007/BF01458064}.

\bibitem{faltingschai}
\emph{G.~Faltings and C.-L. Chai}.
\newblock Degeneration of abelian varieties, volume~22 of \emph{Ergebnisse der
  Mathematik und ihrer Grenzgebiete (3) [Results in Mathematics and Related
  Areas (3)]}.
\newblock Springer-Verlag, Berlin (1990).
\newblock With an appendix by David Mumford.

\bibitem{fk91}
\emph{G.~Frey and E.~Kani}.
\newblock Curves of genus {$2$} covering elliptic curves and an arithmetical
  application.
\newblock In Arithmetic algebraic geometry ({T}exel, 1989), volume~89 of
  \emph{Progr. Math.}, pages 153--176. Birkh\"auser Boston, Boston, MA (1991).

\bibitem{fh91}
\emph{W.~Fulton and J.~Harris}.
\newblock Representation theory: A first course, volume 129 of \emph{Graduate
  Texts in Mathematics}.
\newblock Springer-Verlag, New York (1991).
\newblock Readings in Mathematics.

\bibitem{getzler99}
\emph{E.~Getzler}.
\newblock Resolving mixed {H}odge structures on configuration spaces.
\newblock Duke Math. J. \textbf{96} (1999), 175--203.

\bibitem{getzler02}
\emph{E.~Getzler}.
\newblock Euler characteristics of local systems on {$\mathcal{M}_2$}.
\newblock Compositio Math. \textbf{132} (2002)~(2), 121--135.
\newblock \urlprefix\url{http://dx.doi.org/10.1023/A:1015826400148}.

\bibitem{gorinov1}
\emph{A.~Gorinov}.
\newblock Rational cohomology of the moduli spaces of pointed elliptic curves.
\newblock In preparation (2012).
\newblock \urlprefix\url{http://www.liv.ac.uk/~gorinov/}.

\bibitem{hermann91}
\emph{C.~F. Hermann}.
\newblock Modulfl\"achen quadratischer {D}iskriminante.
\newblock manuscripta math. \textbf{72} (1991)~(1), 95--110.
\newblock \urlprefix\url{http://dx.doi.org/10.1007/BF02568268}.

\bibitem{kani91}
\emph{E.~Kani}.
\newblock Curves of genus 2 with elliptic differentials and the height
  conjecture for elliptic curves.
\newblock In Proc. Conf. No. Th. \& Arith. Geo., pages 30--39 (1991).

\bibitem{kani94}
\emph{E.~Kani}.
\newblock Elliptic curves on abelian surfaces.
\newblock manuscripta math. \textbf{84} (1994), 199--223.

\bibitem{scholl90}
\emph{A.~J. Scholl}.
\newblock Motives for modular forms.
\newblock Invent. Math. \textbf{100} (1990)~(2), 419--430.
\newblock \urlprefix\url{http://dx.doi.org/10.1007/BF01231194}.

\bibitem{vdBE05}
\emph{T.~van~den Bogaart and B.~Edixhoven}.
\newblock Algebraic stacks whose number of points over finite fields is a
  polynomial.
\newblock In Number fields and function fields---two parallel worlds, volume
  239 of \emph{Progr. Math.}, pages 39--49. Birkh\"auser Boston, Boston, MA
  (2005).

\bibitem{vdG88}
\emph{G.~van~der Geer}.
\newblock Hilbert modular surfaces, volume~16 of \emph{Ergebnisse der
  Mathematik und ihrer Grenzgebiete (3) [Results in Mathematics and Related
  Areas (3)]}.
\newblock Springer-Verlag, Berlin (1988).

\bibitem{weinstein07}
\emph{J.~Weinstein}.
\newblock Automorphic Representations with Local Constraints.
\newblock Ph.D. thesis, University of California, Berkeley (2007).

\end{thebibliography}

\end{document}